\newtheorem{theorem}{Theorem}
\newtheorem{lemma}[theorem]{Lemma}
\newtheorem{conjecture}[theorem]{Conjecture}
\newtheorem*{theorem*}{Theorem}
\newtheorem*{corollary*}{Corollary}
\theoremstyle{definition}
\DeclareMathOperator{\mad}{mad}
\title{On orientations with forbidden out-degrees}
\author{
Owen Henderschedt\footnote{Auburn University, Department of Mathematics and Statistics, Auburn U.S.A.
  Email: {\tt olh0011@auburn.edu}.}
\and
Jessica McDonald\footnote{Auburn University, Department of Mathematics and Statistics, Auburn U.S.A.
  Email: {\tt mcdonald@auburn.edu}.   
	Supported in part by Simons Foundation Grant \#845698  }}
\date{}
\begin{document}
\maketitle

\begin{abstract} Let $G$ be a $d$-regular graph and let $F\subseteq\{0, 1, 2, \ldots, d\}$ be a list of forbidden out-degrees. Akbari, Dalirrooyfard, Ehsani, Ozeki, and Sherkati conjectured that if $|F|<\tfrac{1}{2}d$,  then $G$ should admit an $F$-avoiding orientation, i.e., an orientation where no out-degrees are in the forbidden list $F$. The conjecture is known for $d\leq 4$ due to work of Ma and Lu, and here we extend this to $d\leq 6$. The conjecture has also been studied in a generalized version, where $d, F$ are changed from constant values to functions $d(v), F(v)$ that vary over all $v\in V(G)$. We provide support for this generalized version by verifying it for some new cases, including when $G$ is 2-degenerate and when every $F(v)$ has some specific structure.
\end{abstract}

\section{Introduction}

In this paper all graphs are loopless, although parallel edges are permitted. We follow \cite{WestText} for standard terms not defined here.

An \emph{orientation} $D$ of a graph $G$ is an assignment of direction to each edge in $G$, so that an edge points \emph{out} of one of its endpoints and \emph{in} to its other endpoint. For a given orientation, the number of edges pointing out of a vertex is its \emph{out-degree} (and the number pointing in is its \emph{in-degree}). In this paper we are concerned about the existence of orientations where certain out-degree values are forbidden. 
Consider the following conjecture.

\begin{conjecture} [Akbari, Dalirrooyfard, Ehsani, Ozeki, and Sherkati \cite{Avoid}]\label{conj: regConst} Let $G$ be a $d$-reglar graph and let $F\subseteq \{0, 1, \ldots, d\}$ be a list of forbidden out-degrees. If $|F|< \tfrac{1}{2}d$, then $G$ admits an $F$-avoiding orientation, that is, an orientation where no out-degrees are in the forbidden list $F$.
\end{conjecture}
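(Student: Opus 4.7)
The conjecture is open in general, but my plan is to combine an Eulerian baseline with matching-based corrections, appealing to Hakimi--Nash-Williams-style orientation theorems to close residual cases. This is the strategy that plausibly underlies the verification for $d\le 4$ and the extension to $d\le 6$ announced in this paper.

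The first step is the Eulerian baseline. When $d$ is even, any $d$-regular graph $G$ admits an Eulerian orientation in which each vertex has out-degree exactly $d/2$, so the conjecture holds trivially whenever $d/2\notin F$. When $d$ is odd, one tries to remove a perfect matching $M$ --- available for instance when $G$ is bridgeless, by Petersen's theorem --- and Eulerian-orients the $(d-1)$-regular graph $G-M$ to obtain base out-degree $(d-1)/2$ at every vertex; orienting the edges of $M$ then adjusts the out-degree of exactly one endpoint of each matching edge by $+1$, so every vertex ends up with out-degree $(d-1)/2$ or $(d+1)/2$. If $\{(d-1)/2,(d+1)/2\}\cap F=\emptyset$ any orientation of $M$ works; otherwise the choice of orientation on $M$ becomes a local selection problem (a flow or $2$-SAT instance on $M$) which can typically be solved when $|F|<d/2$, since each matching edge has at least one ``good'' orientation.

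When neither baseline applies --- typically when $F$ is concentrated near $d/2$ --- I would invoke Hakimi's orientation theorem: an orientation with prescribed out-degrees $f(v)$ exists iff $\sum_{v\in S} f(v)\ge e(G[S])$ for every $S\subseteq V(G)$. The condition $|F|<d/2$ leaves plenty of room to pick $f(v)\in\{0,\ldots,d\}\setminus F$ close to $d/2$, with $\sum_v f(v)=|E(G)|=d|V(G)|/2$ forced by regularity. The main obstacle is verifying the cut condition globally: for $d\le 6$ one can enumerate the few possibilities for $F$ (up to the symmetry $f\mapsto d-f$) and finish by structural case analysis, perhaps rerouting along $2$-factors where needed; for general $d$ the cut condition seems to require a genuinely new idea, which is presumably why the full conjecture remains open.
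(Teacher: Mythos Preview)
This statement is a conjecture, and the paper does not claim to prove it in general; what the paper actually proves is Theorem~\ref{thm: 5+-reg}, which settles the cases $d=5,6$. Your proposal correctly flags this, so the relevant comparison is between your sketch for small $d$ and the paper's actual argument for $d\le 6$.

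Your Eulerian baseline is morally the same starting point as the paper's, but the implementations diverge in an important way. You propose, for odd $d$, to strip a perfect matching and Eulerian-orient the rest; this already assumes a perfect matching exists, which is not guaranteed for an arbitrary $d$-regular (multi)graph. The paper instead adds a dummy vertex joined to all odd-degree vertices, Eulerian-orients, and restricts---this works unconditionally and yields out-degrees in $\{\lfloor d/2\rfloor,\lceil d/2\rceil\}$ for every vertex.

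More seriously, your ``local selection on $M$'' and Hakimi steps do not close the hard cases. Take $d=5$, $F=\{1,2\}$. After removing a perfect matching (if one exists) and Eulerian-orienting, every vertex has out-degree $2\in F$. Each matching edge then sends exactly one endpoint to out-degree $3$ and leaves the other at $2$; there is no orientation of $M$ that avoids $F$ at both endpoints simultaneously, so the $2$-SAT/flow instance is simply infeasible. Your Hakimi fallback asks for a prescribed out-degree function $f$ with values in $\{0,3,4,5\}$ summing to $\tfrac{5}{2}|V(G)|$, but verifying the cut condition for such an $f$ is exactly the difficulty, and ``structural case analysis'' is not a proof. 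The paper handles this case (and its mirror $F=\{3,4\}$) via a completely different tool: the directed-cut result of Alon, Bollob\'as, Gy\'arf\'as, Lehel and Scott (Theorem~\ref{lem: extreme}), which shows every $(2k{+}1)$-regular graph has a $\{1,\dots,k\}$-avoiding orientation. For the remaining consecutive pairs $F=\{x,x+1\}$ near $d/2$, the paper introduces a new gadget, the \emph{lasso}, and proves Theorem~\ref{thm: 2hole3home}; neither this technique nor the directed-cut input appears in your plan, and without them the $d=5,6$ verification does not go through.
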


Conjecture \ref{conj: regConst} is tight if true -- in particular, the authors of the conjecture proved in \cite{Avoid} that $K_{2k+1}$ has no $F$-avoiding orientation for $F=\{k, \ldots, 2k-1\}$. They also proved that their conjecture holds for all bipartite graphs and all cliques. In terms of specific $d$, Conjecture \ref{conj: regConst} is trivial for $d=1, 2$ (where the forbidden list is empty), and true for $d=3,4$ (see Ma and Lu \cite{no-consecutive}). Here we verify the next cases of $d=5,6$ with the following theorem.

\begin{theorem}\label{thm: 5+-reg}
Let $G$ be a $d$-regular graph with $d\geq 5$, and let $F\subseteq\{0, 1, 2, \ldots, d\}$ with $|F|\leq 2$. Then $G$ admits an $F$-avoiding orientation.
\end{theorem}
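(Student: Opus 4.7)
The plan is to proceed by case analysis on $F$, starting from a near-Eulerian orientation and modifying it by reversing directed paths and cycles. Throughout, I would use two standard facts: reversing a directed cycle in an orientation $D$ leaves every out-degree unchanged, while reversing a directed path from $u$ to $v$ decreases $d^+_D(u)$ by one, increases $d^+_D(v)$ by one, and changes nothing else. Moreover, every $d$-regular graph admits a \emph{near-Eulerian} orientation $D_0$ with all out-degrees in $\{\lfloor d/2 \rfloor, \lceil d/2 \rceil\}$. Consequently, if $F \cap \{\lfloor d/2 \rfloor, \lceil d/2 \rceil\} = \emptyset$ then $D_0$ itself is $F$-avoiding, so we may assume $F$ meets the middle.

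Assume $F$ hits the middle. Since $|F| \leq 2$ and $d \geq 5$, the allowed set $\{0,1,\dots,d\} \setminus F$ still has at least $d-1 \geq 4$ values and contains both an element strictly below $\lfloor d/2 \rfloor$ and an element strictly above $\lceil d/2 \rceil$. I would pick a target pair $s < \lfloor d/2 \rfloor \leq \lceil d/2 \rceil < t$ with $s,t \notin F$, and then try to transform $D_0$ into an orientation with out-degrees in the allowed set (often in $\{s,t\}$ themselves) by iterated path reversals between ``middle'' vertices and well-chosen sinks or sources. The case analysis splits by $d$: for $d \geq 7$ the allowed set is generous enough that this reversal process runs without obstruction; for $d=6$, only $3 \in F$ requires effort, with delicate subcases $F=\{3\}$, $\{2,3\}$, $\{3,4\}$; for $d=5$ the delicate subcases are those in which $F$ meets $\{2,3\}$.

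The main obstacle I anticipate is the extremal subcase $d=5$, $F = \{2,3\}$, which forces every out-degree into $\{0,1,4,5\}$. A natural strategy is to seek a vertex bipartition $V(G) = L \cup H$, orient every edge between $L$ and $H$ from $H$ to $L$, and then orient $G[L]$ so that every vertex has out-degree at most $1$ inside $G[L]$ and orient $G[H]$ so that every vertex has in-degree at most $1$ inside $G[H]$. This arranges that $L$-vertices have total out-degree in $\{0,1\}$ and $H$-vertices in $\{4,5\}$. The problem then reduces to showing that every $5$-regular graph admits a partition $V(G) = L \cup H$ with both $G[L]$ and $G[H]$ pseudoforests, a Nash-Williams / density-type statement. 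Producing such a partition (and handling the analogous extremal subcases at $d=5,6$) is where I expect the real work of the proof to sit.
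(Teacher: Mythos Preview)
Your near-Eulerian starting point and the reduction to $F$ meeting $\{\lfloor d/2\rfloor,\lceil d/2\rceil\}$ match the paper exactly. After that, however, the two arguments diverge, and your sketch has a genuine gap.

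\textbf{The path-reversal step is not justified.} Saying that for $d\geq 7$ ``the reversal process runs without obstruction'' is precisely where the difficulty lies: when you reverse a directed path to fix one bad vertex you may create another, and a naive potential argument can oscillate. The paper's solution is a new gadget, the \emph{lasso}: a directed path with a back-edge at the end, whose ``flip'' changes three out-degrees by $(\mp 1,\pm 2,\mp 1)$. With a two-level potential (minimize $|D_F|$, then maximize the number of vertices whose out-degree is \emph{far} from any hole) the lasso guarantees progress whenever the single hole $\{x,x+1\}$ has homes of size $\geq 2$ on both sides. This is packaged as Theorem~\ref{thm: 2hole3home}, which already covers every consecutive $F$ with $2\leq x\leq d-3$, hence all of $d\geq 6$ and the case $F=\{2,3\}$ at $d=5$. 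Your proposal has no analogue of this tool.

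\textbf{You have singled out the wrong bottleneck at $d=5$.} The case $F=\{2,3\}$ that you flag as ``the main obstacle'' is exactly the one the lasso handles directly. The cases the lasso \emph{cannot} reach are $F=\{1,2\}$ and $F=\{3,4\}$, where one end-home has size $1$; the paper dispatches these separately via Theorem~\ref{lem: extreme} (take $V_1$ a maximum independent set and observe $\mad(G[V_2])\leq 4$, then apply the Alon--Bollob\'as--Gy\'arf\'as--Lehel--Scott characterisation). Your outline does not isolate or treat these.

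\textbf{On your bipartition idea for $F=\{2,3\}$.} Your reduction to ``partition $V(G)$ into $L\cup H$ with $G[L]$ and $G[H]$ both pseudoforests'' is correct and, in fact, true: Lov\'asz's vertex-partition theorem gives, for any graph with $\Delta\leq 5$, a bipartition with $\Delta(G[L]),\Delta(G[H])\leq 2$, and max-degree-$2$ graphs are pseudoforests. (Nash--Williams is about edge-decompositions into forests, which is a different statement.) So this alternative route does work for that one case, but you have neither supplied the proof nor covered the remaining $\{1,2\}$, $\{3,4\}$ cases, and your generic argument for larger $d$ is still missing the mechanism that replaces the lasso.
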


Our proof of Theorem \ref{thm: 5+-reg} appears in Section 4 of this paper, and we shall say more about it shortly.

Conjecture \ref{conj: regConst} can be generalized in two directions, by changing $d, F$ from constant values to functions $d(v), F(v)$ that vary over every $v\in V(G)$. Given an orientation $D$ of a graph $G$, the degree $d(v)$ of a vertex $v\in V(G)$ is partitioned into out-degree $d_D^+(v)$ and in-degree $d_D^-(v)$ in $D$. Given a function $F: V(G) \to 2^{\mathbb{N}}$ of forbidden values for each vertex, we say that the orientation $D$ is \emph{$F$-avoiding} if $d^+_D(v)\notin F(v)$ for all $v\in V(G)$.

\begin{conjecture}[Akbari, Dalirrooyfard, Ehsani, Ozeki, and Sherkati \cite{Avoid}]\label{conj: F-avoiding}
    Let $G$ be a graph and let $F:V(G) \to 2^{\mathbb{N}}$. If $|F(v)| < \tfrac{1}{2}d(v)$ for all $v\in V(G)$, then $G$ admits an $F$-avoiding orientation.
\end{conjecture}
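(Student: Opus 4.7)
The natural first attempt is an inductive argument. Given $G$ with forbidden lists $F$, seek a low-degree vertex $v$ to remove; apply induction to $G - v$ with suitably modified lists $F'$; then orient the deleted edges to satisfy $F$ at $v$ and at its neighbors. The cleanest regime is when $v$ has degree at most $2$, because the hypothesis $|F(v)| < d(v)/2$ forces $F(v) = \emptyset$, placing no constraint at $v$ itself. The $2$-degenerate case is therefore a natural first target, since every subgraph contains such a vertex, and I expect this is the correct route toward the claim in the abstract that the conjecture holds for $2$-degenerate $G$.

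In that setting, I would choose $v$ with $d_G(v) \leq 2$ and set $F'(u) = F(u) \cap (F(u) - 1)$ for each neighbor $u$ of $v$ (where $F(u)-1 = \{k-1 : k \in F(u)\}$), leaving $F'(u) = F(u)$ at all other vertices. The interpretation is that $k \in F'(u)$ encodes an out-degree in $G - v$ from which neither $k$ nor $k+1$ avoids $F(u)$, so avoiding $F'(u)$ in $G - v$ preserves a valid choice of direction for the edge $uv$. The verification $|F'(u)| < d_{G-v}(u)/2$ splits by parity of $d_G(u)$: if $d_G(u)$ is even it follows directly from $|F'(u)| \leq |F(u)|$; if $d_G(u)$ is odd one uses $|F'(u)| \leq |F(u)| - 1$ for any nonempty $F(u)$, which holds because $\max F(u) \notin F(u) - 1$. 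Induction then yields an $F'$-avoiding orientation of $G - v$, and one extends by orienting the edges at $v$ consistently with the recorded choices (free at $v$, constrained at each neighbor); parallel edges $vu$ only require iterating the intersection $F(u) \cap (F(u)-1) \cap (F(u)-2)$ and repeating the bookkeeping.

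For graphs of larger degeneracy the analogue breaks immediately: if $v$ has degree $3$ then $|F(v)|$ may equal $1$, and deletion is no longer free. The main obstacle, as I see it, is building a single modified list $F'(u)$ that simultaneously encodes all orientations of the several edges at $v$ while shrinking by enough to remain below $d_{G-v}(u)/2$. A more global strategy seems needed, for example a polymatroid reformulation: by Hakimi's theorem the achievable out-degree functions $f \colon V \to \mathbb{N}$ are those with $\sum_{v \in S} f(v) \geq |E(G[S])|$ for every $S \subseteq V$, so the problem becomes selection of $f(v) \in \{0, \dots, d(v)\} \setminus F(v)$ subject to a supermodular system. A probabilistic attack via uniform random orientation gives each vertex bad-event probability $\sum_{k \in F(v)} \binom{d(v)}{k} / 2^{d(v)}$, which unfortunately is only $O(1/\sqrt{d(v)})$ in the worst case and seems too large for any straightforward Local Lemma argument.

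A complementary strategy for the structured-$F$ cases mentioned in the abstract is to exploit structure in the forbidden sets directly. If each $F(v)$ is a single interval, then avoiding $F(v)$ reduces to choosing between a lower bound or an upper bound on $d^+(v)$, and classical orientation theorems (Hakimi, Frank--Gy\'arf\'as) handle prescribed one-sided out-degree constraints. One then consistently picks the \emph{side} at each vertex, perhaps greedily along some ordering or through a discharging argument that balances vertices choosing upper-bound constraints against those choosing lower-bound ones. Iterating this idea for $F(v)$ that is a bounded union of intervals or an arithmetic progression looks tractable and is, I expect, the path to the second family of new cases the paper establishes.
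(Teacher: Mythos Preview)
This statement is an open conjecture; the paper does not prove it, and your proposal rightly does not claim to either. What you offer is a survey of partial strategies, so the appropriate comparison is between your sketches and the partial results the paper actually establishes.

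Your $2$-degenerate argument is essentially correct (modulo the trivial bookkeeping of discarding any neighbor that becomes isolated after deleting $v$, so the inductive hypothesis still applies) and is in fact cleaner than the paper's. The paper obtains the $2$-degenerate case as Theorem~\ref{thm: 2-deg} via Lemma~\ref{minCounter}: it works with a minimum counterexample, deletes one edge at a time, removes one carefully chosen element from a single neighbor's list, and splits into cases according to whether $d_G(u)\in F(u)$. Your device $F'(u)=F(u)\cap(F(u)-1)$ handles both edges and both neighbors at once without case analysis, and your parity verification is exactly the right computation. Either route works; yours is more direct for this theorem, while the paper's lemma additionally yields structural constraints on minimal counterexamples (parts (a)--(c)) that it reuses for Theorem~\ref{thm: bipartite-plus-extra}.

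Your instinct about Frank--Gy\'arf\'as for interval-shaped lists is also on target: the paper uses precisely that to prove Theorem~\ref{thm: FGforb}, the case where every hole is an end-interval. What you do not anticipate is the paper's main new tool for the remaining structured-$F$ result and for the $5$- and $6$-regular cases: a local reorientation gadget called a \emph{lasso} (a directed path plus one back-edge) whose flip changes out-degrees by $(\pm 1,\mp 2,\pm 1)$ at three designated vertices. This, together with a directed-cut result of Alon, Bollob\'as, Gy\'arf\'as, Lehel and Scott, is how the paper handles holes of size two and closes Theorem~\ref{thm: 5+-reg}. Your polymatroid and Local Lemma sketches are reasonable speculation but are not the route taken for any result here.
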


The known tight examples for Conjecture \ref{conj: F-avoiding} are the same as those for Conjecture \ref{conj: regConst}. On the other hand, while bipartite graphs were shown to also satisfy the more general conjecture in \cite{Avoid} (in fact even with list sizes equal to $\tfrac{1}{2}d(v)$), cliques were not, and indeed Conjecture \ref{conj: F-avoiding} is still unproven for cliques of size $6$ and greater. Here, we add to the families known to satisfy Conjecture \ref{conj: F-avoiding} with the following two results.

\begin{theorem}\label{thm: 2-deg} Let $G$ be a 2-degenerate graph and let $F:V(G)\to 2^{\mathbb{N}}$ with $F(v)<\tfrac{1}{2}d(v)$ for all $v\in V(G)$. Then $G$ has an $F$-avoiding orientation.
\end{theorem}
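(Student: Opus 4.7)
The plan is to induct on $|V(G)|$: use the 2-degeneracy of $G$ to find a vertex $v$ with $d(v)\le 2$, and reduce to $G-v$ equipped with a suitably modified list of forbidden values. The base case $|V(G)|=1$ is trivial, and the subcases $d(v)\le 1$ are handled by a simpler version of the argument below, so we focus on the main case $d(v)=2$. The crucial point is that $d(v)=2$ forces $|F(v)|<1$, i.e., $F(v)=\emptyset$, so the out-degree of $v$ is completely free.

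Let $u_1,u_2$ be the (not necessarily distinct) neighbors of $v$. When $u_1\ne u_2$ we set
\[
F'(u_i)=\{x\in F(u_i):\, x+1\in F(u_i)\}\quad\text{for }i\in\{1,2\},
\]
and $F'(u)=F(u)$ for every other $u\in V(G-v)$; in the parallel case $u_1=u_2=a$ we instead set $F'(a)=\{x\in F(a):\, x+1,x+2\in F(a)\}$. Applying induction to $(G-v,F')$ yields an $F'$-avoiding orientation $D'$ of $G-v$, which we extend to $G$ by orienting the edge(s) at $v$. The extension is possible because, by the very definition of $F'(u_i)$, the condition $d^+_{D'}(u_i)\notin F'(u_i)$ guarantees that at least one of $d^+_{D'}(u_i)$ and $d^+_{D'}(u_i)+1$ (respectively one of $d^+_{D'}(a)$, $d^+_{D'}(a)+1$, $d^+_{D'}(a)+2$) lies outside $F(u_i)$, so we can choose the orientation(s) at $v$ accordingly; the choices at $u_1$ and $u_2$ are independent in the distinct-neighbor case, and the constraint at $v$ itself is vacuous.

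The main obstacle will be verifying that $F'$ satisfies the inductive hypothesis $|F'(u)|<\tfrac{1}{2}\,d_{G-v}(u)$ at the modified vertices. The key observation is that whenever $F(u_i)\ne\emptyset$ the largest element of $F(u_i)$ cannot lie in $F'(u_i)$, so $|F'(u_i)|\le|F(u_i)|-1$ in the single-edge case (and $|F'(a)|\le|F(a)|-2$ in the parallel case, by the same ``endpoint of a run'' observation applied twice). Combined with $|F(u_i)|\le\lfloor(d(u_i)-1)/2\rfloor$ and $d_{G-v}(u_i)=d(u_i)-1$ or $d(u_i)-2$, a short parity analysis yields the required strict inequality. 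A minor subtlety is the corner case in which a neighbor becomes isolated in $G-v$ (that is, $d(u_i)=1$, or $d(a)=2$ in the parallel case): then $F'$ at that vertex is forced to be empty, the vertex imposes no constraint, and it can simply be deleted before invoking the inductive hypothesis.
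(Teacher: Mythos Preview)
Your argument is correct. The overall strategy matches the paper's: both show that a vertex $v$ of degree at most $2$ (which necessarily has $F(v)=\emptyset$) can be removed, with forbidden lists adjusted at its neighbours so that an $F'$-avoiding orientation of the smaller graph extends. The difference is in how this reduction is carried out. The paper packages it inside a general lemma about minimum counterexamples (Lemma~\ref{minCounter}): it first proves structural properties (a)--(c) about edges between even-degree vertices and vertices with $0$ or $d(u)$ forbidden, then uses these to split the $\delta\le 2$ argument into two cases, deleting a single edge and removing one specific value from $F(u)$. Your reduction is more direct: you delete $v$ in one shot and set $F'(u_i)=\{x\in F(u_i):x+1\in F(u_i)\}$, which simultaneously guarantees extendability and the drop $|F'(u_i)|\le |F(u_i)|-1$ needed for the inductive hypothesis, without any preliminary case analysis. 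The trade-off is that the paper's route also yields the auxiliary facts (a)--(c), which it reuses elsewhere; your route is self-contained and slightly slicker for this theorem alone.
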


\begin{theorem}\label{thm: bipartite-plus-extra}
Let $G$ be a graph and let $F:V(G)\to 2^{\mathbb{N}}$ where $|F(v)|<\frac{1}{2}d(v)$ for all $v\in V(G)$. Suppose that $G$ can be decomposed into a bipartite graph and a subgraph $H$ with $\Delta(H)\leq 2$ and such that every vertex $v\in V(H)$ with $d_H(v)=2$ has $d_G(v) \equiv 0 \pmod{2}$.  Then $G$ admits an $F$-avoiding orientation.
\end{theorem}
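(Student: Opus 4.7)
The plan is to handle the two pieces of the decomposition in sequence, pushing all the work onto the bipartite half. Write $G = B \cup H$ as in the hypothesis, and fix \emph{any} orientation $D_H$ of $H$; let $c(v) := d^+_{D_H}(v) \in \{0,1,2\}$. For each $v$, define a shifted forbidden list for the bipartite piece,
\[
F'(v) := \bigl\{k - c(v) : k \in F(v)\bigr\} \cap \{0,1,\ldots,d_B(v)\}.
\]
If $D_B$ is any $F'$-avoiding orientation of $B$, then $D_B \cup D_H$ orients $G$ with $d^+(v) = d^+_{D_B}(v) + c(v) \notin F(v)$. Since \cite{Avoid} provides an $F'$-avoiding orientation of the bipartite graph $B$ whenever $|F'(v)| \le \tfrac{1}{2}d_B(v)$ for all $v$, the proof reduces to verifying this size bound.

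The verification is a short case analysis on $d_H(v) \in \{0,1,2\}$, using $|F'(v)| \le |F(v)|$ and $d_B(v) = d_G(v) - d_H(v)$. When $d_H(v) = 0$, the bound $|F(v)| < \tfrac{1}{2}d_G(v)$ is exactly what is needed. When $d_H(v) = 1$, integrality of $|F(v)|$ upgrades $|F(v)| < \tfrac{1}{2}(d_B(v)+1)$ to $|F(v)| \le \tfrac{1}{2}d_B(v)$. When $d_H(v) = 2$, the hypothesis that $d_G(v)$ is even forces $d_B(v)$ to be even, so $|F(v)| < \tfrac{1}{2}d_B(v) + 1$ again yields $|F(v)| \le \tfrac{1}{2}d_B(v)$.

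There is not really a hard step here; the content of the theorem is the observation that an arbitrary orientation of $H$ suffices, and that the parity hypothesis on degree-$2$ vertices of $H$ is precisely what is needed so that the bound survives the reduction. The $d_H(v) = 2$ case is where this condition is used and is the only place it is used: without it, a vertex with $d_G(v)$ odd and $d_H(v) = 2$ could have $|F(v)| = \tfrac{d_B(v)+1}{2} > \tfrac{1}{2}d_B(v)$, putting $F'(v)$ just beyond what the bipartite result of \cite{Avoid} can handle. So once the reduction and the three-case calculation are laid out, the theorem follows directly from the known bipartite case.
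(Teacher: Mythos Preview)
Your proof is correct and follows essentially the same route as the paper: orient $H$, shift each forbidden list by the $H$-out-degree, verify the $\le \tfrac{1}{2}d_B(v)$ bound by a short case analysis on $d_H(v)$, and then invoke the bipartite result of \cite{Avoid}. The one genuine difference is that the paper first takes the trouble to orient $H$ via an Eulerian/dummy-vertex argument so that $d^+_{D_H}(v)\in\{0,1\}$ for every $v$, whereas you observe that an \emph{arbitrary} orientation of $H$ already suffices; this is a small but real simplification, since your case analysis depends only on $d_H(v)$ (not on $c(v)$) and the bound $|F'(v)|\le|F(v)|$ holds regardless of how $H$ is oriented.
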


Note that by a 2-degenerate graph we mean a graph where every subgraph contains a vertex of degree at most $2$. Hence the class of 2-degenerate graphs includes all 1-degenerate graphs (i.e. all forests). It also includes all planar graphs of girth at least 6. (To see this, note that for a planar graph $G$ of girth $g$, Euler's formula gives $(\mad(G)-2 ) ( g-2 ) < 4$, where $\mad(G)$ is  the \emph{maximum average degree} of $G$ defined by $\mad(G)=\max\{\tfrac{2|E(H)|}{|V(H)|}: H\subseteq G \}$). Our proof of Theorem \ref{thm: 2-deg}, which is in Section 2, involves arguing about smallest possible counterexamples; as part of Section 2 we also prove a general list of properties that any smallest counterexample to Conjecture \ref{conj: F-avoiding} must have. The same sort of arguments allow us to bootstrap the afore-mentioned result for bipartite graphs from \cite{Avoid} into Theorem \ref{thm: bipartite-plus-extra}, the proof of which is also contained in Section 2.

There has been more success with approximations to Conjecture \ref{conj: F-avoiding} than verifications for special families.  In their initial paper, Akbari et al.\cite{Avoid} showed that replacing $\tfrac{1}{2}$ with $\tfrac{1}{4}$ results in a true statement. Recently Bradshaw, Chen, Ma, Mohar, and Wu \cite{Mohar} improved this upper bound to $\lfloor\frac{1}{3}d_G(v)\rfloor$ using the combinatorial nullstellensatz of Alon and Tarsi \cite{NULL-TARSI}\cite{NULL}. 

Another approach to Conjecture \ref{conj: F-avoiding} has been to consider special sorts of lists. To this end, let $G$ be a graph with forbidden lists $F:V(G)\to 2^{\mathbb{N}}$. For every vertex $v\in V(G)$ we can partition $\{0,\ldots, d(v)\}$ into maximal intervals in $F(v)$, which we call \emph{holes}, and maximal intervals not in $F(v)$, which we call \emph{homes}.  Ma and Lu \cite{no-consecutive} proved that given a graph $G$ and  function $F:V(G)\to 2^{\mathbb{N}}$, if every hole has size at most one then $G$ admits an $F$-avoiding orientation. Note that lists of this sort  may satisfy $|F(v)|\sim \tfrac{1}{2}d_G(v)$ for all $v$. In this paper we take another step in this direction and allows holes of size two, given that the overall ratio is somewhat worse, and given a condition on the \emph{end-intervals}. The \emph{end-intervals} for a vertex $v$ are those holes or homes containing $0$ or $d(v)$; note that each vertex has two end-intervals but they need not be distinct.

\begin{theorem}\label{thm: 2hole3home}
Let $G$ be a graph and let $F: V(G)\to 2^{\mathbb{N}}$ be such that for each $v\in V(G)$: all holes have size at most two; between any pair of distinct holes is a home of size at least 3, and both end-intervals are homes of size at least two. Then $G$ admits an $F$-avoiding orientation. 
\end{theorem}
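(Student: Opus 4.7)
My plan is to adapt the minimum-orientation strategy used by Ma and Lu \cite{no-consecutive} (who handled the size-$1$-hole case) to accommodate size-$2$ holes by a more delicate path-reversal argument. Write $A(v) := \{0, 1, \ldots, d(v)\} \setminus F(v)$. Two immediate consequences of the hypothesis drive the proof: first, $\{0, 1, d(v)-1, d(v)\} \subseteq A(v)$ for every $v$, since both end-intervals are homes of size $\geq 2$; second, for every $k \in \{0, \ldots, d(v)\}$ at least one of $k-1, k+1$ lies in $A(v)$ (when in range), since every hole has size $\leq 2$ and every home has size $\geq 2$. Given an orientation $D$, call a vertex $u$ \emph{up-flexible} if $d_D^+(u)+1 \in A(u)$ and \emph{down-flexible} if $d_D^+(u)-1 \in A(u)$; the second fact says every vertex --- good or bad --- is at least one of these.

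Fix an orientation $D^*$ of $G$ minimizing $\beta(D) := |\{v : d_D^+(v) \in F(v)\}|$; the goal is to show $\beta(D^*)=0$. The workhorse is that reversing a directed $v$-to-$u$ path in $D^*$ decreases $d^+(v)$ by $1$, increases $d^+(u)$ by $1$, and leaves all other out-degrees unchanged. Suppose $\beta(D^*) \geq 1$; pick a bad $v$ and (by the symmetric argument on $R^-_{D^*}(v)$ and down-flexible targets otherwise) assume $v$ is down-flexible. Let $S := R^+_{D^*}(v)$; since $d^+(v) \geq 2$ (as $\{0,1\} \subseteq A(v)$), $S \supsetneq \{v\}$. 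If some $u \in S \setminus \{v\}$ is bad, or is good and up-flexible, reversing a directed $v$-to-$u$ path fixes $v$ without creating any new bad vertex, so $\beta$ strictly decreases, a contradiction. Hence every $u \in S \setminus \{v\}$ is good with $d_{D^*}^+(u)+1 \in F(u)$; in particular each such $u$ satisfies $1 \leq d_{D^*}^+(u) \leq d(u)-3$, because $F(u) \cap (\{0,1\} \cup \{d(u)-1,d(u)\}) = \emptyset$.

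The main obstacle is ruling out this last configuration; a small example shows care is required. Take $u, v$ joined by three parallel edges plus a pendant at each, with $d(u)=d(v)=4$ and $F(u)=F(v)=\{2\}$: here a single reversal of a $uv$-edge merely swaps which of $u,v$ is bad, so strictly $\beta$-decreasing moves can fail at $\beta=1$ (one must actually use the symmetric up-shift argument to fix $v$). To push past this, I plan to use the identity $\sum_{u \in S} d_{D^*}^+(u) = e(G[S])$ (valid because no out-edge of $S$ leaves $S$ by construction of $R^+$) together with the bound $d_{D^*}^+(u) \leq d(u)-3$ on $S \setminus \{v\}$ --- an artifact of the home-size-$\geq 3$ between-distinct-holes hypothesis --- to constrain $|S|$ and $e(G[S])$ sharply, and then either derive a double-counting contradiction or locate a directed cycle in $D^*[S]$ whose reversal (out-degrees unchanged) relocates $R^+(v)$ so that a flexible target is exposed. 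If the direct route is not sufficient, the backup is to replace $\beta$ by the lexicographic potential $(\beta(D), \gamma(D))$, where $\gamma(D)$ counts bad vertices whose hole has size $2$, and show that any single reversal preserving $\beta$ must strictly decrease $\gamma$, forcing termination.
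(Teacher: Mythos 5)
Your setup is sound and, up to notation, it reproduces the first half of the paper's argument: minimize the number of bad vertices, pick a bad, down-flexible $v$, and conclude by path reversals that every $u$ reachable from $v$ is good with $d^+(u)+1$ forbidden (the paper's set $D_B$ of vertices sitting just below a hole). But the theorem's entire difficulty lives in exactly the configuration you stop at, and none of your three proposed escapes closes it. The identity $\sum_{u\in S}d^+(u)=e(G[S])$ plus $d^+(u)\le d(u)-3$ gives no contradiction, because $d_{G[S]}(u)$ can be far smaller than $d_G(u)$ (edges may enter $S$ from outside), and indeed the configuration is genuinely realizable -- it cannot be counted away. Reversing a directed cycle in $D^*[S]$ preserves all out-degrees, so without an explicit potential that strictly improves you have no termination argument. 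And the lexicographic potential $(\beta,\gamma)$ with $\gamma$ counting bad vertices whose hole has size two fails: a $\beta$-preserving reversal can replace a bad vertex whose hole has size one by a new bad vertex whose hole has size two, so $\gamma$ can increase, and in the stuck configuration there need be no single path reversal that improves $(\beta,\gamma)$ at all.

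The paper gets past this point with two ingredients you are missing. First, the secondary criterion is not about the bad vertices: subject to $|D_F|$ minimum, it maximizes $|D_X|$, the number of vertices whose out-degree is not adjacent to any hole. A reversal between $v$ and a vertex $w\in S_v\cap T_v\setminus\{v\}$ then makes progress because $w\in D_B$ gets pushed into $D_X$ (this is where the ``home of size at least $3$ between holes'' hypothesis is used), without increasing $|D_F|$. Second, when $S_v\cap T_v=\{v\}$, no single path reversal suffices, and the paper introduces the lasso (Lemma \ref{lem: lasso}): since $S_v\setminus\{v\}\subseteq D_B$ contains no sinks (end-homes of size $\ge 2$), there is an out-lasso starting at $v$, and flipping it changes the out-degrees of three vertices by $(-1,+2,-1)$, fixing $v$, possibly trading one bad vertex for another, but strictly increasing $|D_X|$. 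Without the three-vertex move (or some equivalent device) and a secondary potential of this kind, your argument cannot terminate, so as it stands the proposal has a genuine gap at the central step.
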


Observe that in \cref{thm: 2hole3home}, the lists may satisfy
$|F(v)| \sim \tfrac{2}{5}d_G(v)$, beating the $\tfrac{1}{3}$ bound. Our proof of \cref{thm: 2hole3home} uses a new tool for modifying orientations called a \emph{lasso}; we will introduce this concept and prove \cref{thm: 2hole3home} in Section 4. In Section 3 we prove the following two results about very special forbidden lists that satisfy Conjecture \ref{conj: F-avoiding}.

\begin{theorem}\label{thm: FGforb}
    Let $G$ be a graph and let $F: V(G)\to 2^{\mathbb{N}}$ where $|F(v)|\leq \frac{1}{2}d(v)$ for all $v\in V(G)$. If every hole is an end-interval, then $G$ admits an $F$-avoiding orientation.
\end{theorem}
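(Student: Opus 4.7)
The plan is to show that any \emph{balanced} orientation of $G$---one with $|d^+_D(v)-d^-_D(v)|\le 1$ for every vertex, so that $d^+_D(v)\in\{\lfloor d(v)/2\rfloor,\lceil d(v)/2\rceil\}$---is automatically $F$-avoiding under the given hypotheses. The existence of such an orientation is a classical consequence of Euler's theorem: in each connected component of $G$, pair up the odd-degree vertices arbitrarily and add an auxiliary edge joining each pair so the component becomes Eulerian; orient every edge along a chosen Euler circuit of each component, and then delete the auxiliary edges. The resulting orientation of $G$ has the desired property.

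Under the hypothesis that every hole is an end-interval, I can write $F(v)\subseteq [0,a_v]\cup[b_v,d(v)]$ for some integers $a_v<b_v$, with the convention that $a_v=-1$ or $b_v=d(v)+1$ when the corresponding end-interval of $F(v)$ is absent. From $|F(v)|\le d(v)/2$ I read off $a_v+1\le d(v)/2$; since $a_v+1$ is an integer this gives $a_v+1\le\lfloor d(v)/2\rfloor$, so $\lfloor d(v)/2\rfloor\notin[0,a_v]$. Symmetrically, $d(v)-b_v+1\le d(v)/2$ forces $b_v\ge d(v)/2+1$, which in integers means $b_v\ge\lceil d(v)/2\rceil+1$, so $\lceil d(v)/2\rceil\notin[b_v,d(v)]$. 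Combining these, $\{\lfloor d(v)/2\rfloor,\lceil d(v)/2\rceil\}\cap F(v)=\emptyset$ for every $v$, and hence the balanced orientation is $F$-avoiding.

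There is no real obstacle in this argument: the end-interval hypothesis collapses the allowed out-degree range at each vertex into a single interval around the middle of $[0,d(v)]$, and the bound $|F(v)|\le d(v)/2$ guarantees that this interval contains both $\lfloor d(v)/2\rfloor$ and $\lceil d(v)/2\rceil$. The only slightly delicate point is handling the parity of $d(v)$ correctly when converting $|F(v)|\le d(v)/2$ into integer inequalities on $a_v$ and $b_v$. Note that this proof in fact yields the marginally stronger statement with $|F(v)|\le \tfrac{1}{2}d(v)$ in place of the strict inequality of Conjecture \ref{conj: F-avoiding}.
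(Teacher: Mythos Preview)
Your argument is correct and takes a genuinely different route from the paper. The paper invokes the Frank--Gy\'arf\'as theorem: it defines $\ell(v), u(v)$ as the endpoints of the single home interval, observes that $\ell(v)\le \tfrac{1}{2}d(v)\le u(v)$, and then checks that the Frank--Gy\'arf\'as subset conditions are satisfied so that an orientation with $d^+_D(v)\in[\ell(v),u(v)]$ exists. You instead bypass Frank--Gy\'arf\'as entirely by taking a balanced orientation (via the standard Euler-circuit trick) and noting that $\lfloor d(v)/2\rfloor$ and $\lceil d(v)/2\rceil$ already lie in that home interval. Your approach is more elementary and more explicit, and in fact shows that a \emph{single} universal orientation works regardless of the particular lists $F$; the paper's approach, on the other hand, illustrates how Frank--Gy\'arf\'as fits into the broader framework of the paper and would adapt more readily if one wanted asymmetric home intervals not necessarily containing $d(v)/2$.

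Two small presentational points: you write $F(v)\subseteq[0,a_v]\cup[b_v,d(v)]$, but your size bounds $a_v+1\le|F(v)|$ and $d(v)-b_v+1\le|F(v)|$ really use equality (i.e.\ that $[0,a_v]$ and $[b_v,d(v)]$ are the actual holes), which is what the end-interval hypothesis gives; it would be cleaner to say so. Also, your final claim $\{\lfloor d(v)/2\rfloor,\lceil d(v)/2\rceil\}\cap F(v)=\emptyset$ tacitly uses that $\lfloor d(v)/2\rfloor\le\lceil d(v)/2\rceil$ to rule out the ``crossed'' memberships, which is of course immediate but worth one word.
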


\begin{theorem}\label{lem: extreme}
    Let $k\in\mathbb{Z}^+$. Every $(2k+1)$-regular graph admits a $\{1,2,...,k\}$-avoiding orientation.
\end{theorem}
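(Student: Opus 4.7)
The plan is to realize the desired orientation by choosing a sink set. Specifically, I would take any \emph{maximal} independent set $S$ of $G$, orient every edge with an endpoint in $S$ toward that endpoint, and argue that the remaining subgraph $G-S$ admits an orientation with maximum in-degree at most $k$.

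The key observation is that maximality of $S$ forces $\Delta(G-S)\le 2k$: every $v\in V(G)\setminus S$ has at least one neighbor in $S$ (else $S\cup\{v\}$ would be independent), so at most $2k$ of its $2k+1$ edges survive in $G-S$. Consequently every subgraph $H\subseteq G-S$ satisfies $2|E(H)|\le \Delta(G-S)\cdot |V(H)| \le 2k|V(H)|$, and so by the classical orientation theorem of Hakimi, $G-S$ admits an orientation $D'$ with $d^-_{D'}(v)\le k$ for every vertex. (Alternatively, one can make $G-S$ into a $2k$-regular (multi)graph by pairing up odd-degree vertices with fictitious edges, decompose into $k$ spanning $2$-factors by Petersen, and orient each $2$-factor as a union of directed cycles; each $v$ then has in-degree exactly $k$ from the $2$-factors, hence at most $k$ after removing the fictitious edges.)

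Gluing the pieces together, each $v\in S$ is a sink (out-degree $0$), while each $v\in V(G)\setminus S$ with $a_v:=|N_G(v)\cap S|$ has out-degree
\[
d^+(v)\;=\;a_v + \bigl(d_{G-S}(v)-d^-_{D'}(v)\bigr)\;\ge\;a_v+(2k+1-a_v)-k\;=\;k+1.
\]
Thus every out-degree lies in $\{0\}\cup\{k+1,\ldots,2k+1\}$, giving the desired $\{1,\ldots,k\}$-avoiding orientation. Parallel edges cause no trouble: the counting $a_v\ge 1$ only uses that $v$ has at least one edge to $S$, and Hakimi's theorem holds for multigraphs.

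There is no substantial obstacle here; the only insight needed is that a \emph{maximal} (as opposed to maximum) independent set already drops the maximum degree of the remainder by one, which is exactly the slack required to invoke Hakimi's theorem and guarantee that non-sinks pick up their missing out-degree from the induced orientation of $G-S$.
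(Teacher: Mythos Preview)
Your proof is correct and shares the paper's key idea: pick a maximal independent set $S$ so that $G-S$ has maximum degree at most $2k$, and hence can be oriented with all in-degrees at most $k$. The paper packages this second step inside the characterization of Alon, Bollob\'as, Gy\'arf\'as, Lehel, and Scott (it verifies that the partition $(S,\,V\setminus S)$ satisfies $\mad(G[S])=0$ and $\mad(G[V\setminus S])\le 2k$ and then quotes the lemma), whereas you build the orientation explicitly via Hakimi's theorem on $G-S$; your route is therefore more self-contained, essentially reproving the relevant direction of the cited lemma by hand. One small remark: the parenthetical Petersen alternative is not quite right as stated---pairing up the odd-degree vertices of $G-S$ makes all degrees even but does not by itself produce a $2k$-regular graph---so that branch would need a bit more work, though this is immaterial since the Hakimi argument already suffices.
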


We will see that that proof of Theorem \ref{thm: FGforb} follows from a classic theorem on orientations due to Frank and Gyarfas \cite{FG} (also Hakimi \cite{Hak}), and Theorem \ref{lem: extreme} follows from a result about maximum directed cuts due to Along, Bollob\'{a}s, Gy\'{a}rf\'{a}s, Lehel and Scott \cite{AB}. Theorem \ref{lem: extreme} is interesting in that it is close to the tightness result of Akbari et al. \cite{Avoid} mentioned above. The reason we prove it here however it that it is a needed special case towards our proof of Theorem \ref{thm: 5+-reg}. We will see that the other needed cases come from Theorem \ref{thm: 2hole3home} and our new lasso technique.

\section{Minimum counterexamples and special graph classes}\label{sec: minimum counterexample}

We prove the following properties about any minimum counterexample to Conjecture \ref{conj: F-avoiding}.

\begin{lemma}\label{minCounter}
Let $G$ be a graph and let $F:V(G)\to 2^{\mathbb{N}}$ with $F(v)<\tfrac{1}{2}d_G(v)$ for all $v\in V(G)$. Suppose that $G$ does not admit an $F$-avoiding orientation, and suppose that $|E(G)|$ is minimum subject to this. Then:
 \begin{enumerate}[label = {(\arabic*)}]
     \item[(a)] all vertices of even degree in $G$ form an independent set;
     \item[(b)] if $v$ is a vertex of even degree in $G$, then $u\in N_G(v)$ implies $0, d_G(u)\not\in F(u)$; and
    \item[(c)] if $u,v$ are any pair of adjacent vertices in $G$ then either $0\not\in F(u)$ or $d_G(v)\not\in F(v)$.
\end{enumerate}
 Moreover, if $|E(G)|+|V(G)|$ is minimum, then $\delta(G) \geq 3$.
\end{lemma}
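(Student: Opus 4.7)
The plan is to prove all four statements by contradiction against the relevant minimality. In each case I suppose the conclusion fails, delete an edge (for (a)--(c)) or a vertex (for the moreover) to form a smaller graph $G'$, construct a modified forbidden list $F'$ so that $G'$ still satisfies $|F'(w)| < \tfrac{1}{2}d_{G'}(w)$ for all $w$, apply the minimality of $G$ to obtain an $F'$-avoiding orientation of $G'$, and finally extend this to an $F$-avoiding orientation of $G$. The modifications employed are a downward shift $(F(u) - 1) \cap \{0,\ldots,d_{G'}(u)\}$, a truncation $F(u) \cap \{0,\ldots,d_{G'}(u)\}$, or an intersection $F(u) \cap (F(u) - 1)$ (possibly iterated).

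For (a), suppose adjacent $u, v$ both have even degree. Delete $uv$, apply the shift at $u$ and the truncation at $v$, and plan to orient $uv$ as $u \to v$. The evenness of $d_G(u)$ and $d_G(v)$ gives at least a full integer of slack in $|F(\cdot)| < \tfrac{1}{2}d_G(\cdot)$, which is enough to absorb the loss of one in degree at each end. For (b), if $0 \in F(u)$ and $v \in N(u)$ has even degree, the shifted list $(F(u) - 1) \cap \mathbb{N}$ additionally loses the value obtained from $0 \in F(u)$, which supplements the even-degree slack at $v$. The case $d_G(u) \in F(u)$ is symmetric, using truncation at $u$ and orienting $v \to u$. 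For (c), if $0 \in F(u)$ and $d_G(v) \in F(v)$ for adjacent $u, v$, the simultaneous shift at $u$ and truncation at $v$ each drop an element exactly by virtue of the assumed memberships, which provides all the slack needed even without any parity assumption.

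For the moreover, note first that $F(v) = \emptyset$ whenever $d_G(v) \leq 2$, so no constraint is imposed at $v$ itself. Take $G' = G - v$. When $d_G(v) = 1$ with neighbor $u$, set $F'(u) = F(u) \cap (F(u) - 1)$: the maximum element $M$ of $F(u)$ is never in this intersection (since $M + 1 \notin F(u)$), so $|F'(u)| \leq |F(u)| - 1$, providing the unit of slack needed against losing one in degree. Any $F'$-avoiding orientation of $G'$ has $d^+_{G'}(u) \notin F(u) \cap (F(u) - 1)$, so at least one of $\{d^+_{G'}(u), d^+_{G'}(u) + 1\}$ avoids $F(u)$, and we orient $uv$ accordingly. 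The case $d_G(v) = 2$ with distinct neighbors uses the same trick independently at each neighbor; for parallel edges we iterate once more with $F'(u) = F(u) \cap (F(u) - 1) \cap (F(u) - 2)$, so the two largest elements of $F(u)$ both drop out, giving two units of slack against the two lost degrees.

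The main obstacle, across all parts, is the parity bookkeeping required to verify $|F'(w)| < \tfrac{1}{2}d_{G'}(w)$ in the tight cases when $d_G(w)$ is odd versus even and $|F(w)|$ is as large as possible. The choice of shift, truncation, or intersection is designed precisely so that the element(s) lost on one side compensate for the degree(s) lost on the other; once this is set up, the verifications are routine but must be done case by case for each parity.
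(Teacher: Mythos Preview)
Your treatment of (a)--(c) is essentially the paper's proof: delete the edge, shift or truncate $F$ at each endpoint according to how the edge will be oriented, and use either the parity slack (even degree) or the dropped extreme element ($0$ or $d_G(\cdot)$ in $F$) to restore the strict inequality. The paper packages all three parts into a single argument by setting $\mathcal{A}=\{v:d_G(v)\text{ even}\}\cup\{v:d_G(v)\in F(v)\}$ and $\mathcal{B}=\{v:d_G(v)\text{ even}\}\cup\{v:0\in F(v)\}$ and showing no edge can join $\mathcal{A}$ to $\mathcal{B}$, but the computations unwind to exactly your case split.

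For the ``moreover'' your route differs from the paper's, and is arguably tidier. The paper splits on whether every neighbour $u$ of $v_0$ has $d_G(u)\in F(u)$: if so, part (b) forces $d_G(v_0)=1$, and one deletes $v_0$ and removes $d_G(u_0)$ from $F(u_0)$; otherwise one chooses $\alpha\in F(u)$ with $\alpha+1\notin F(u)$, removes only that $\alpha$, and deletes a \emph{single edge} rather than the vertex. Your device $F'(u)=F(u)\cap(F(u)-1)$ replaces this dichotomy uniformly: it always drops $\max F(u)$, yielding $|F'(u)|\le|F(u)|-1$, and the reconstruction ``orient $uv$ so that whichever of $d^+_{D'}(u),\,d^+_{D'}(u)+1$ avoids $F(u)$ becomes the new out-degree'' is precisely what the paper's $\alpha$-trick achieves. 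The iterated intersection $F(u)\cap(F(u)-1)\cap(F(u)-2)$ for a double edge is a clean extension the paper avoids by only ever removing one edge at a time. One small wrinkle to patch: if a neighbour $u$ of $v$ has $F(u)=\emptyset$ (equivalently $d_G(u)\le 2$), your size bound gives nothing, and after deleting $v$ such a $u$ may become isolated, where the hypothesis $|F'(u)|<\tfrac12 d_{G'}(u)$ reads $0<0$. Just delete any resulting isolated vertices (they have empty forbidden lists) before invoking minimality.
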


\begin{proof} Define the following sets of vertices:
$$A_1=B_1=\{v\in V(G) : v \text{ has even degree}\};$$
$$A_2=\{v\in V(G) : d_G(v)\in F(v)\};\hspace*{.2in} B_2=\{v\in V(G) : 0\in F(v)\};$$
$$\mathcal{A}=A_1\cup A_2; \hspace*{.2in}\textrm{and } \hspace*{.2in}\mathcal{B}=B_1\cup B_2.$$
Proving (a) means proving that there is no edge between $A_1$ and $B_1$, proving (b) means proving there is no edge between $A_1$ and $B_2$ and no edge between $B_1$ and $A_2$, and proving (c) means proving there is no edge between $A_2$ and $B_2$. We will handle all these case together by supposing, for a contradiction, that there exists an edge $e$ joining $u\in \mathcal{A}$ with $v\in\mathcal{B}$.

Let $G' = G-e$. We define $F':V(G')\to 2^{\mathbb{N}}$ as follows:
   \begin{equation*}
        F'(w) = \begin{cases}
            F(w) \hspace{.125cm} &\text{if $w\notin \{u,v\}$} \\
            F(w) \hspace{.125cm} &\text{if $w=u$ and $u\in A_1$}\\
            F(w)\setminus \{d_G(w)\} \hspace{.125cm} &\text{if $w=u$ and $u\in A_2$}\\
            \{i-1 : i\in F(v), i\geq 1\} \hspace{.125cm} &\text{if  $w=v$.}
        \end{cases}
    \end{equation*}
We will show that
\begin{equation}\label{Fcond}
|F'(w)|< \tfrac{1}{2}d_{G'}(w) \hspace*{.2in}\textrm{for all $w\in V(G')$}.
\end{equation}
To this end, note that for a vertex $w\not\in \{u, v\}$, $d_G'(w)=d_G(w)$ and $F'(w)=F(w)$ so condition (1) is trivially satisfied for such $w$. If $w=u$ and $u\in A_2$, then  $d_G'(w)= d_G(w)-1$ and $|F'(w)|=|F(w)|-1$. Satisfying (1) therefore amounts to $|F(w)|-1 <\tfrac{1}{2}\left(d_G(w)-1\right)$, or equivalently, $|F(w)| < \tfrac{1}{2}d_G(w)+\tfrac{1}{2}$, which we 
know by assumption.  If $w=u$ and $u\in A_1$, then $F'(w)=F(w)$ while $d_G'(w)< d_G(w)$, but since $d_G(w)$ is even, (1) follows from $|F(w)|< \tfrac{1}{2}d_{G}(w)$. Finally, consider that $w=v$, where we know that $d_G'(w)< d_G(w)$. If $0\in F(v)$ then $|F'(v)| = |F(v)|-1$, and the same computation we did in the $A_2$-case verifies (1). Otherwise, since $v\in \mathcal{B}$, we know that $v\in B_1$ and hence that $v$ has even degree is $G$, so we get (1) analogously to the $A_1$-case.

Since our $F':V(G')\to 2^{\mathbb{N}}$ satisfies (1) and since $G$ is edge-minimal, 
we get an $F'$-avoiding orientation $D'$ of $G'$. Define an orientation $D$ of $G$ from $D'$ by orienting $e$ from $v$ to $u$.  Then
$d^+_D(w)=d^+_{D'}(w)\notin F'(w)=F(w)$ for any $w\notin \{u,v\}$. Note that $d_D^+(u)=d_{D'}(u)$ since $v$ points towards $u$. If $u\in A_1$ then $F'(u) = F(u)$ so $d_D^+(u)\not\in F(u)$; if $u\in A_2$ then $F'(u)=F(u)\setminus \{d_G(u)\}$ and $d^+_D(u) \neq d_G(u)$ , so we have $d^+_D(u)\notin F(u)$. Finally consider $v$: since $d_D^+(v) -1 = d^+_{D'}(v) \in \{i-1: i\in F(v), i\geq 1\}$, we have $d^+_D(v)\notin F(v)$. Thus, $D$ is an $F$-avoiding orientation of $G$, contradiction. This completes our proof of (a), (b), and (c).

In order to prove (d), suppose for a contradiction that there exists $v_0\in V(G)$ with $d_G(v_0)\leq 2$. Since $|F(v_0)| < \frac{1}{2}(2)$, $F(v_0) = \emptyset$. We cannot have $d_G(v_0)=0$, as isolates cannot satisfy this strict inequality, so $d_G(v_0)\in\{1, 2\}$. We divide our proof into two cases. \\

\noindent\textbf{Case 1:} \emph{Every $w\in N_G(v_0)$ has $d_G(w)\in F(w)$.}

Note that in this case, we know that $d_G(v_0)=1$, by part (b) above; let $u_0$ be the lone neighbour of $v_0$ in $G$. 
Define $G' = G - v_0$ and define forbidden lists $F'$ for $G'$ by : $F'(w)=F(w)$ if $w\neq u_0$, and $F'(u_0)= F(u_0) \setminus \{d_G(u_0)\}$.
Clearly $|F'(w)|=|F(w)| < \frac{1}{2}d_G(w) = \frac{1}{2}d_{G'}(w)$ for all $w\neq u_0$. For $u_0$ we have
$$|F'(u_0)|=|F(u_0)|-1 < \tfrac{1}{2}d_G(u_0)-1 = \tfrac{1}{2}(d_{G'}(u_0) +1)-1<\tfrac{1}{2}d_{G'}(u_0).$$
So by minimality there exists an orientation $D'$ of $G'$ which is $F'$-avoiding. Define an orientation $D$ from $D'$ by directing that last edge from $v_0$ to $u_0$. This means that $d^+_D(w)=d^+_{D'}(w)$ for all $w\in V(G)\setminus \{v_0\}$. Recall that $F(v_0)=\emptyset$, so $D$ trivially satisfies the $F$-avoiding condition for $v_0$. Since $F'(w)=F(w)$ for all $w\neq u_0$, the $F$-avoiding condition is satisfied for all such $w$. 
For $u_0$ there is one extra forbidden value in $F(u_0)$ as compared to $F'(u_0)$, but this extra value is $d_G(u_0)$, which is certainly not the out-degree of $u_0$ in $D$ since the edge between $v_0$ and $u_0$ points into $u_0$. Hence $D$ is an $F$-avoiding orientation of $G$, contradiction.  \\

\noindent\textbf{Case 2:} \emph{There exists some $u\in N_G(v_0)$ with $d_G(u)\not\in F(u)$.}

In particular, the assumption of this case implies that there exists some $\alpha\in F(u)$ such that $\alpha+1\notin F(u)$.
Let $\tilde{G}$ be the graph obtained from $G$ by deleting one edge, between $u$ and $v_0$, and also deleting $v_0$ if the edge-deletion makes it isolated. We define the following forbidden lists $\tilde{F}$ of $\tilde{G}$.
\begin{equation*}
    \tilde{F}(w) = \begin{cases}
        F(w) \hspace{.5cm} &\text{if $w\in V(G)$,  $w\neq u$}\\
        F(w)\setminus\{\alpha\} \hspace{.5cm} &\text{if $w = u$}
    \end{cases}
\end{equation*}
For any $w\notin \{u,v_0\}$ we have $|\tilde{F}(w)|=|F(w)|< \tfrac{1}{2}d_{G}(w) = \tfrac{1}{2}d_{G'}(w).$ For $u$ we get
$$|\tilde{F}(u)|=|F(u)|-1 <\tfrac{1}{2}d_G(u)-1 = \tfrac{1}{2}(d_{\tilde{G}}(u)+1)-1 < \tfrac{1}{2}d_{\tilde{G}}(u).$$ 
If $v_0\in V(\tilde{G})$, then $v_0$ is not isolated so $|F(v_0)|=|\tilde{F}(v_0)|=0$ implies that $|\tilde{F}(v_0)|< \tfrac{1}{2}d_{\tilde{G}}(v_0)$.

By minimality there exists an exists an $\tilde{F}$-avoiding orientation $\tilde{D}$ of $\tilde{G}$. We now obtain an orientation $D$ from $\tilde{D}$ by orienting our deleted edge as follows. If $d^+_{\tilde{D}}(u) = \alpha$, orient the edge from $u$ to $v_0$. Otherwise orient from $v_0$ to $u$. Clearly $d^+_{D}(w)=d^+_{\tilde{D}}(w) \notin \tilde{F}(w) = F(w)$ for all $w\neq \{u,v_0\}$. Since $F(v_0)=\emptyset$, in order to show that $D$ is an $F$-avoiding orientation of $G$ (and get our desired contradiction) it remains only to check that $D$ is $F$-avoiding at $u$. If $d^+_{\tilde{D}}(u) = \alpha$, then $d^+_D(u) = \alpha +1 \notin F(u)$. On the other hand, if $d^+_{\tilde{D}}(u)\neq \alpha$ then $d^+_{D}(u)= d^+_{\tilde{D}}(u)$. Since $\tilde{F}(u)\subset F(u)$, we also get that $d^+_D(u) \notin F(u)$ in this situation. Hence $D$ is an $F$-avoiding orientation of $G$, contradiction.
\end{proof}

Our work in Lemma \ref{minCounter}, and in particular the last sentence of the lemma statement, immediately implies that Conjecture \ref{conj: F-avoiding} holds for 2-degenerate graphs. 

\setcounter{theorem}{3}

\begin{theorem} Let $G$ be a 2-degenerate graph and let $F:V(G)\to 2^{\mathbb{N}}$ with $F(v)<\tfrac{1}{2}d_G(v)$ for all $v\in V(G)$. Then $G$ has an $F$-avoiding orientation.
\end{theorem}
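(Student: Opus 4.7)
The plan is to argue by contradiction directly from Lemma~\ref{minCounter}. Suppose the theorem fails, and among all 2-degenerate counterexamples $(G,F)$ (with the list-size hypothesis $|F(v)|<\tfrac12 d_G(v)$), pick one minimizing $|E(G)|+|V(G)|$. The key observation that makes this work inside the class is that every edge-deletion and every vertex-deletion preserves 2-degeneracy, so any smaller graph produced by the reductions in the proof of Lemma~\ref{minCounter} is itself 2-degenerate, and hence admits the required $F'$- or $\tilde{F}$-avoiding orientations by the minimality of $(G,F)$. In other words, the proof of Lemma~\ref{minCounter} carries over verbatim when ``minimum'' is interpreted within the class of 2-degenerate graphs.

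With that in hand, I would simply quote Lemma~\ref{minCounter}: since $|E(G)|+|V(G)|$ is minimum, the final assertion of the lemma gives $\delta(G)\geq 3$. But $G$ is 2-degenerate, so by definition $G$ itself (being a subgraph of itself) must contain a vertex of degree at most $2$, contradicting $\delta(G)\geq 3$. This contradiction completes the proof.

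There is essentially no obstacle here beyond making the ``minimum within the class'' remark explicit; all the real work has already been done in Lemma~\ref{minCounter}. The only thing to be slightly careful about is to confirm that the reductions used in proving parts (a)--(d) of Lemma~\ref{minCounter} never leave the class of 2-degenerate graphs, which is immediate since those reductions only delete edges and/or isolated vertices.
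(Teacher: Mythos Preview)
Your argument is correct and is essentially the paper's proof: take a counterexample minimizing $|V(G)|+|E(G)|$ within the class of 2-degenerate graphs, observe that the reductions in the $\delta\geq 3$ part of Lemma~\ref{minCounter} produce subgraphs of $G$ (hence still 2-degenerate), conclude $\delta(G)\geq 3$, and contradict 2-degeneracy. One small inaccuracy: you say the reductions ``only delete edges and/or isolated vertices,'' but in Case~1 of the $\delta\geq 3$ argument the vertex $v_0$ being deleted has degree~$1$, not~$0$; this is harmless since any subgraph of a 2-degenerate graph is 2-degenerate regardless.
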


\begin{proof} Suppose not, and let $G$ be a counterexample where $|V(G)|+|E(G)|$ is smallest. Since $G$ has a vertex of degree at most 2, and so does every subgraph of $G$, we get our desired result by repeating the $\delta \geq 3$ part of the proof of Lemma \ref{minCounter}.
\end{proof}

It is tempting to want to say that if $G$ is a graph where all even-degree vertices form an independent set (or $G$ doesn't have any even-degree vertices), then $G$ satisfies Conjecture \ref{conj: F-avoiding}, but this does not follow from Lemma \ref{minCounter} because such a class is not closed under taking subgraphs. On the other hand, we can use arguments similar to those above to expand further the class of graphs which are known to satisfy Conjecture \ref{conj: F-avoiding}.

\begin{theorem}\label{thm: bipartite-plus-extra}
Let $G$ be a graph and let $F:V(G)\to 2^{\mathbb{N}}$ where $|F(v)|<\frac{1}{2}d_G(v)$ for all $v\in V(G)$. Suppose that $G$ can be decomposed into a bipartite graph and a subgraph $H$ with $\Delta(H)\leq 2$ and such that every vertex $v\in V(H)$ with $d_H(v)=2$ has $d_G(v) \equiv 0 \pmod{2}$.  Then $G$ admits an $F$-avoiding orientation.
\end{theorem}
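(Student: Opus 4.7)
The plan is to orient the subgraph $H$ arbitrarily and then apply the strengthened bipartite result from \cite{Avoid} to the bipartite subgraph (call it $B$) with appropriately shifted forbidden lists. Specifically, fix any orientation $D_H$ of $H$ (since $\Delta(H)\leq 2$, $H$ decomposes into paths and cycles, each of which is trivially orientable), and let $h^+(v) = d^+_{D_H}(v) \in \{0,1,2\}$ for each $v \in V(G)$. Define shifted lists $F' : V(G) \to 2^{\mathbb{N}}$ by
$$F'(v) = \{f - h^+(v) : f \in F(v),\; f \geq h^+(v)\}.$$

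The key verification is that $|F'(v)| \leq \tfrac{1}{2}d_B(v)$ for every $v \in V(G)$. Since $|F'(v)| \leq |F(v)|$, it suffices to show $|F(v)| \leq \tfrac{1}{2}(d_G(v) - d_H(v)) = \tfrac{1}{2}d_B(v)$. If $d_H(v) = 0$, this is immediate from $|F(v)| < \tfrac{1}{2}d_G(v)$. If $d_H(v) = 1$ and $d_G(v)$ is odd, integrality gives $|F(v)| \leq \tfrac{d_G(v)-1}{2} = \tfrac{1}{2}d_B(v)$; if $d_H(v) = 1$ and $d_G(v)$ is even, integrality forces $|F(v)| \leq \tfrac{d_G(v)}{2} - 1 < \tfrac{1}{2}d_B(v)$. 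Finally, if $d_H(v) = 2$ then by hypothesis $d_G(v)$ is even, so $|F(v)| \leq \tfrac{d_G(v)}{2}-1 = \tfrac{1}{2}d_B(v)$. Notice that the parity condition on $H$-degree-$2$ vertices is precisely what prevents failure of this inequality; without it, a vertex with $d_H(v)=2$ and $d_G(v)$ odd would give $|F(v)|$ potentially as large as $\tfrac{d_G(v)-1}{2}$, exceeding $\tfrac{1}{2}d_B(v)$.

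With this verified, I would invoke the bipartite case from \cite{Avoid} (which permits list sizes up to $\tfrac{1}{2}d(v)$) on $B$ with lists $F'$ to obtain an $F'$-avoiding orientation $D_B$ of $B$. Combining $D_B$ and $D_H$ yields an orientation $D$ of $G$ satisfying $d^+_D(v) = d^+_{D_B}(v) + h^+(v)$ for every $v$. If $d^+_D(v) \in F(v)$, then $d^+_{D_B}(v) = d^+_D(v) - h^+(v) \geq 0$ would lie in $F'(v)$, contradicting $F'$-avoidance. Hence $D$ is $F$-avoiding, as desired.

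There is no serious obstacle here: once the decomposition and the strengthened bipartite result are in hand, the proof reduces to the brief parity-based case check above. The only conceptual point is recognizing that the parity condition on $H$-degree-$2$ vertices is exactly what is needed so that the slack in the strict inequality $|F(v)| < \tfrac{1}{2}d_G(v)$ absorbs the loss of degree in passing from $G$ to $B$, allowing the equality-allowed bipartite theorem to be applied downstream.
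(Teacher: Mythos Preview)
Your proof is correct and follows the same overall strategy as the paper: orient $H$, shift the forbidden lists by the resulting $H$-out-degrees, verify the $\tfrac{1}{2}d_B(v)$ bound via the same parity case-check, invoke the strengthened bipartite theorem from \cite{Avoid}, and combine. The one difference is that the paper first orients $H$ so that $d^+_{D_H}(v)\in\{0,1\}$ for every $v$ (by adding a dummy vertex to the odd-degree vertices of $H$ and consistently orienting the resulting cycles), whereas you allow an arbitrary orientation of $H$ and hence also the possibility $h^+(v)=2$. Your verification shows this extra care is unnecessary: when $d_H(v)=2$ the parity hypothesis already forces $|F(v)|\le \tfrac{d_G(v)}{2}-1=\tfrac{1}{2}d_B(v)$ regardless of how the two $H$-edges at $v$ are directed, so the bipartite bound holds for any orientation of $H$. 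This is a mild simplification of the paper's argument rather than a different route.
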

\setcounter{theorem}{9}

\begin{proof} We may assume without loss of generality that $H$ is spanning, since adding vertices of degree zero to $H$ has no effect on our assumptions.
If all vertices have degree 0 or 2 in $H$, then $H$ is a collection of cycles and isolates, and by consistently orienting the cycles we get an orientation $D_H$ of $H$ with $d^+_{D_H}(v)= \{0,1\}$ for all $v\in V(H)$. If $H$ is not Eulerian, then by adding one dummy vertex and joining it to every odd-degree vertex of $H$, we get a collection of cycles (and isolates) $H'$ -- by consistently orienting the cycles in $H'$ and then deleting the dummy vertex, we again get an 
orientation $D_{H}$ of $H$ with $d^+_{D_{H}}(v)\in\{0,1\}$ for all $v\in V(H)$. It remains now to orient every edge in $E(G)\setminus E(H)$, and show that our overall orientation is $F$-avoiding.

Let $G' = G-E(H)$. We define forbidden lists $F'$ of $G'$ as follows.
           \begin{equation*}
        F'(w) = \begin{cases}
            F(w) \hspace{.125cm} &\text{ if $d^+_{D_H}(w)=0$} \\
            \{i-1 : i\in F(w), i\geq 1\} \hspace{.125cm} &\text{ if $d^+_{D_H}(w) = 1$}
        \end{cases}
    \end{equation*}
For any $w\in V(G)$ with $d^+_{D_H}(w) = 0$, we have $|F'(w)|=|F(w)| < \tfrac{1}{2}d_G(w) = \tfrac{1}{2}d_{G'}(w).$ Now suppose $d^+_{D_H}(w) = 1$ and hence $d_H(w)\in \{1,2\}$. If $d_H(w) = 1$ then 
\[|F'(w)|\leq |F(w)|<\tfrac{1}{2}d_G(w) = \tfrac{1}{2}(d_{G'}(w)+1)\] 
which implies $|F'(w)| \leq \frac{1}{2}d_{G'}(w)$. Finally if $d_H(w) = 2$, then 
\[|F'(w)|\leq |F(w)|<\tfrac{1}{2}d_G(w) = \tfrac{1}{2}(d_{G'}(w)+2).\] 
This implies $|F'(w)|\leq \frac{1}{2}(d_G'(w)+1)$ but since $d_{G'}(w)\equiv d_{G}(w)\equiv 0 \pmod{2}$ in this case, $|F'(w)|\leq \frac{1}{2}d_{G'}(w)$. In all cases, $|F'(v)|\leq \frac{1}{2}d_{G'}(v)$ for all $v\in V(G')$, and since $G'$ is bipartite, there exists an $F'$-avoiding orientation $D'$ of $G'$ by the above-mentioned result of Akbari et al. \cite{Avoid}. We claim $D:= D' \cup D_H$ is an $F$-avoiding orientation of $G$. 

Clearly for any $v\in V(G)$ with $d^+_{D_H} = 0$ then $d^+_{D'}(v)\notin F'(v)=F(v)$. Now consider $v\in V(G)$ with $d^+_{D_H} = 1$. We have $d^+_D(v)-1 = d^+_{D'} \notin F'(v) = \{i: i-1\in F(v), i\geq 1\}$, so we have $d^+_D(v)\notin F(v)$. Thus, $D$ is an $F$-avoiding orientation of $G$.
\end{proof}

While it is still uncertain whether \cref{conj: F-avoiding} holds true for $K_6$, \cref{thm: bipartite-plus-extra} validates it for $K_6$ with any matching of size at least $2$ removed (since such a graph can be decomposed into copies of $K_{2,4}$, $K_2$ and $C_4$).

\section{Two results about specialized lists}\label{sec:special list}

The following theorem is a classic result of Frank and Gy\'arf\'as \cite{FG} on orientations of graphs (which also generalizes prior work of Hakimi \cite{Hak}) .

\begin{theorem}[Frank and Gy\'arf\'as \cite{FG}]\label{thm_FG} Let $G$ be a graph and let $\ell,u: V(G) \rightarrow \mathbb{N}$ with $\ell\leq u$. Then $G$ has an orientation $D$ with $\ell(v)\leq d^+_D(v) \leq u(v)$ for all $v\in V(G)$ if and only if $\forall$ $S\subseteq V(G)$,
$$
\ell(S) \leq e[S]+\delta(S) \hspace*{.2in}\text{and}\hspace*{.2in} e[S]\leq u(S),$$
where $\ell(S) = \sum_{x\in S}\ell(x)$, $u(S) = \sum_{x\in S}u(x)$, $e[S]=|E(G[S])|$, and $\delta(S) = |\{uv\in E(G) : u\in S, v\notin S\}|$
\end{theorem}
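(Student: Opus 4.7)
The plan is to prove both directions separately: necessity by a direct counting of out-degrees, and sufficiency by reducing the orientation problem to a bipartite $b$-matching (equivalently, a network-flow feasibility question) and appealing to a classical min-max theorem.

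For necessity, suppose $D$ is an orientation of $G$ with $\ell(v)\leq d^+_D(v)\leq u(v)$ for every $v$. Given any $S\subseteq V(G)$, I would sum out-degrees over $S$: each edge of $G[S]$ contributes exactly $1$ (it points out of exactly one of its two endpoints, both of which lie in $S$), while each edge of $\delta(S)$ contributes either $0$ or $1$. Thus
\[ e[S] \;\leq\; \sum_{v\in S} d^+_D(v) \;\leq\; e[S]+\delta(S), \]
and combining this with the pointwise bounds $\ell(v)\leq d^+_D(v)\leq u(v)$ yields both stated inequalities.

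For sufficiency, I would encode the choice of orientation as a bipartite feasibility problem. Construct a bipartite graph $B$ with one side $E(G)$ and the other $V(G)$, joining each $e=uv\in E(G)$ to both $u$ and $v$ in $B$. An orientation $D$ of $G$ corresponds to a subgraph $M\subseteq E(B)$ in which each $e\in E(G)$ has $M$-degree exactly $1$ (its unique $M$-neighbor is the tail of $e$ in $D$, so $d^+_D(v)=d_M(v)$), and the desired bounds become $\ell(v)\leq d_M(v)\leq u(v)$. Existence of such $M$ is a bipartite $b$-matching question: via an auxiliary network with a source $s$, a sink $t$, unit-capacity arcs from $s$ to a node $x_e$ for each $e\in E(G)$, unit-capacity arcs $x_e\to u$ and $x_e\to v$ for each $e=uv$, and arcs $v\to t$ of capacity $u(v)$ and lower bound $\ell(v)$, Hoffman's circulation theorem (equivalently, max-flow min-cut with lower bounds) says a feasible flow of value $|E(G)|$ exists iff for every vertex subset $S\subseteq V(G)$ the natural cut conditions hold. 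Translating these back into $G$ gives exactly $\ell(S)\leq e[S]+\delta(S)$ and $e[S]\leq u(S)$.

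The main obstacle is showing that a minimum-capacity cut in the auxiliary network always has a clean form indexed by a single vertex subset $S\subseteq V(G)$, rather than a mixed partition that also splits the edge-side nodes. For each edge node $x_e$ one must argue that it may be moved, without increasing cut capacity, to the side containing its better-placed endpoints; once this normalization is fixed, the cut capacity decomposes into a linear function of $e[S]$, $\delta(S)$, $\ell(S)$, and $u(S)$, and the two cases (cuts saturating the upper bounds versus those saturating the lower bounds) correspond exactly to the two stated conditions. An alternative route avoiding flow machinery is induction on $|E(G)|$ using submodularity of the slacks $\psi(S)=u(S)-e[S]$ and $\phi(S)=e[S]+\delta(S)-\ell(S)$: pick an edge $uv$, try to orient it in each of the two directions, and use submodularity of tight sets to show at least one direction preserves both families of inequalities.
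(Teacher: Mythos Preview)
The paper does not prove this theorem at all: it is stated as a classical result of Frank and Gy\'arf\'as \cite{FG} (generalizing Hakimi \cite{Hak}) and is immediately used as a black box to derive Theorem~\ref{thm: FGforb}. There is therefore no ``paper's own proof'' to compare your proposal against.

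That said, your sketch is along standard lines. The necessity argument is complete and correct. For sufficiency, the reduction to a bipartite degree-constrained subgraph (equivalently, a flow network with lower bounds on the $v\to t$ arcs) and an appeal to Hoffman's circulation theorem is the usual route; your identification of the key technical step---normalizing a minimum cut so that each edge-node $x_e$ sits on the ``obvious'' side determined by its endpoints---is exactly right, though you have not actually carried it out. If you wanted to make this self-contained you would need to do that case analysis explicitly, or alternatively verify that the Hoffman condition $\sum_{a\in\delta^-(X)}\ell(a)\le \sum_{a\in\delta^+(X)}u(a)$ for all $X$ in your auxiliary network reduces, after the normalization, to precisely the two displayed inequalities. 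The inductive/submodularity alternative you mention is also viable and is closer in spirit to how Frank typically presents such results.
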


Theorem \ref{thm_FG} immediatley gives us the following result towards Conjecture \ref{conj: F-avoiding}, where in fact the strict inequality of the conjecture is replaced by $\leq$.

\setcounter{theorem}{6}

\begin{theorem}
    Let $G$ be a graph and let $F: V(G)\to 2^{\mathbb{N}}$ where $|F(v)|\leq \frac{1}{2}d(v)$ for all $v\in V(G)$. If every hole is an end-interval, then $G$ admits an $F$-avoiding orientation.
\end{theorem}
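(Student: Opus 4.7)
The plan is to reformulate the $F$-avoiding condition as an interval constraint on out-degrees and invoke the Frank--Gy\'arf\'as theorem (\cref{thm_FG}). The key observation is that the hypothesis ``every hole is an end-interval,'' together with the fact that $|F(v)| \leq \tfrac{1}{2}d(v) < d(v)+1$ rules out $F(v)$ covering all of $\{0,\ldots,d(v)\}$, forces the set of allowed out-degrees $\{0,\ldots,d(v)\} \setminus F(v)$ to be a single nonempty interval $[\ell(v), u(v)]$: the low end-hole contributes the values $\{0,\ldots,\ell(v)-1\}$ and the high end-hole contributes $\{u(v)+1,\ldots,d(v)\}$ (either or both may be empty). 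Since $|F(v)| = \ell(v) + (d(v)-u(v))$, the size hypothesis splits into the two bounds $\ell(v) \leq \tfrac{1}{2}d(v)$ and $u(v) \geq \tfrac{1}{2}d(v)$, which will feed directly into Frank--Gy\'arf\'as.

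I would then apply \cref{thm_FG} with these $\ell$ and $u$. For every $S \subseteq V(G)$, the handshake identity $\sum_{v\in S} d_G(v) = 2e[S] + \delta(S)$ reduces the lower-bound check to
\[
\ell(S) \;\leq\; \tfrac{1}{2}\sum_{v\in S}d(v) \;=\; e[S] + \tfrac{1}{2}\delta(S) \;\leq\; e[S] + \delta(S),
\]
and the upper-bound check to
\[
e[S] \;\leq\; e[S] + \tfrac{1}{2}\delta(S) \;=\; \tfrac{1}{2}\sum_{v\in S}d(v) \;\leq\; u(S).
\]
Both conditions of \cref{thm_FG} are satisfied, so $G$ admits an orientation $D$ with $\ell(v) \leq d^+_D(v) \leq u(v)$ for every $v$; by construction any such $D$ is $F$-avoiding.

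I do not anticipate a real obstacle. The one subtle point is confirming $\ell(v) \leq u(v)$ in all cases: this is immediate when $F(v)$ has at most one hole, and when $F(v)$ has two distinct end-holes it follows from the definition of a hole as a \emph{maximal} forbidden interval, which forces at least one allowed value to sit between them. Once the interval reformulation is in place, the entire argument collapses to a one-line counting identity and an appeal to the classical Frank--Gy\'arf\'as result.
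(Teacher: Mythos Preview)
Your proposal is correct and follows essentially the same approach as the paper: reformulate the allowed out-degrees as a single interval $[\ell(v),u(v)]$, deduce $\ell(v)\le \tfrac12 d(v)\le u(v)$ from the size hypothesis together with $0\le \ell(v)$ and $u(v)\le d(v)$, and then verify the Frank--Gy\'arf\'as conditions via the handshake identity $\sum_{v\in S}d(v)=2e[S]+\delta(S)$. The only cosmetic difference is that the paper phrases the Frank--Gy\'arf\'as check as a proof by contradiction, whereas you verify the two inequalities directly; your treatment of the ``$\ell(v)\le u(v)$'' subtlety is in fact slightly more explicit than the paper's.
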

\setcounter{theorem}{10}

\begin{proof} Since every hole is an end-interval, each vertex has a single home interval; for every $v\in V(G)$, let $\ell(v)$ be the smallest value in this home interval and let $u(v)$ be the largest. Then for every $v\in V(G)$, $F(v)=\{0, \ldots, \ell(v)-1\}\cup \{u(v)+1, \ldots, d(v)\}$, with $|F(v)|=\ell(v)+d(v)-u(v)$. Since $|F(v)|\leq\tfrac{1}{2}d(v)$ and $0\leq \ell(v)\leq u(v) \leq d(v)$, this implies that $\ell(v) \leq \tfrac{1}{2}d(v)$ and $u(v)\geq\tfrac{1}{2}d(v)$ for all $v\in V(G)$.

Suppose for contradiction that $G$ does not admit an $F$-orientation. Then by \cref{thm_FG}, there exists some $S\subseteq V(G)$ with either 
    \begin{equation}\label{FG-interval option1}
    \ell(S) > e[S]+\delta(S)  
    \end{equation}
    or
    \begin{equation}\label{FG-interval option2}
    u(S)<e[S] .   
    \end{equation}
First suppose that $S$ realizes (\ref{FG-interval option1}). Since $\ell(v)\leq\frac{1}{2}d(v)$ for all $v\in V(G)$, we get 
\[\tfrac{1}{2}\sum_{x\in S}d_G(x) \geq \ell(S) > e[S] +\delta(S)= \tfrac{1}{2}\left(\sum_{x\in S}d_G(x) - \delta(S)\right) + \delta(S)=\tfrac{1}{2}\left(\sum_{x\in S}d_G(x) + \delta(S)\right) ,\] 
which is a contradiction. We can argue similarly in the case where $S$ realizes (\ref{FG-interval option2}). Here, since $u(v)\geq\tfrac{1}{2}d(v)$ for all $v\in V(G)$, we get  
\[\tfrac{1}{2}\sum_{x\in S}d_G(v) \leq u(S) < e[S] = \tfrac{1}{2}\left(\sum_{x\in S}d_G(x) - \delta(S)\right),\] 
which is also a contradiction.
\end{proof}

Consider the following theorem about orientations where for every vertex, at least one of the oudegree or in-degree is bounded. 

\begin{lemma}[Along, Bollob\'{a}s, Gy\'{a}rf\'{a}s, Lehel and Scott \cite{AB}]\label{lem: extreme-orientations} Let $a, b$ be non-negative integers. $G$ admits an orientation $D$ with either $d^+_D(v)\leq a$ or $d^-_D(v)\leq b$ for all $v\in V(G)$ if an only if there exists a partition $(V_1,V_2)$ of $V(G)$ with $\mad(G[V_1])\leq 2a$ and $\mad(G[V_2])\leq 2b$.
\end{lemma}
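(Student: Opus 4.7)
The plan is to establish both directions of the equivalence by reducing to the classical fact that a graph $H$ admits an orientation with maximum out-degree at most $k$ if and only if $\mad(H) \leq 2k$. This characterization is immediate from \cref{thm_FG} applied with $\ell \equiv 0$ and $u \equiv k$: the conditions collapse to $e[S] \leq k|S|$ for every $S \subseteq V(H)$, which is exactly the $\mad$ bound.

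For the ``if'' direction, suppose we are given a partition $(V_1, V_2)$ with $\mad(G[V_1]) \leq 2a$ and $\mad(G[V_2]) \leq 2b$. Using the observation above, I would take an orientation $D_1$ of $G[V_1]$ in which every out-degree is at most $a$, and an orientation of $G[V_2]$ in which every out-degree is at most $b$; reversing the latter gives an orientation $D_2$ of $G[V_2]$ in which every in-degree is at most $b$. I would then extend $D_1 \cup D_2$ to all of $G$ by orienting every edge between $V_1$ and $V_2$ from $V_2$ toward $V_1$. Such a cross-edge contributes nothing to the out-degree of its $V_1$-endpoint and nothing to the in-degree of its $V_2$-endpoint, so the combined orientation $D$ satisfies $d^+_D(v) \leq a$ for every $v \in V_1$ and $d^-_D(v) \leq b$ for every $v \in V_2$.

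For the ``only if'' direction, let $D$ be an orientation of $G$ in which every vertex satisfies at least one of $d^+_D(v) \leq a$ or $d^-_D(v) \leq b$. Define $V_1 = \{v : d^+_D(v) \leq a\}$ and $V_2 = V(G) \setminus V_1$, so that every $v \in V_2$ has $d^-_D(v) \leq b$. For any nonempty $S \subseteq V_1$, each $v \in S$ satisfies $d^+_{D[S]}(v) \leq d^+_D(v) \leq a$, so $|E(G[S])| = \sum_{v \in S} d^+_{D[S]}(v) \leq a|S|$, which gives $\mad(G[V_1]) \leq 2a$; the symmetric in-degree count on subsets of $V_2$ yields $\mad(G[V_2]) \leq 2b$.

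I do not expect a serious obstacle here: once the $\mad$-versus-orientation equivalence is in place the rest is a bookkeeping exercise. The only real design choice is orienting cross-edges in the ``if'' direction, and the choice $V_2 \to V_1$ is essentially forced since the other direction would add to $d^+(V_1)$ or $d^-(V_2)$ and potentially break the bound.
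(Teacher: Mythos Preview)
Your argument is correct. Both directions go through as written: the reduction to the $\mad$ characterization via \cref{thm_FG} is valid, the cross-edge orientation $V_2 \to V_1$ is the right choice in the ``if'' direction, and the subset-counting in the ``only if'' direction is clean.

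Note, however, that the paper does not actually prove this lemma; it is quoted as a result of Alon, Bollob\'{a}s, Gy\'{a}rf\'{a}s, Lehel and Scott \cite{AB} and used as a black box to derive \cref{lem: extreme}. So there is no ``paper's own proof'' to compare against. Your write-up supplies a short self-contained proof where the paper simply cites the literature, which is a reasonable addition if you want the exposition to be independent of \cite{AB}.
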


We get the following as a corollary.

\setcounter{theorem}{7}
\begin{theorem}
    Let $k\in\mathbb{Z}^+$. Every $(2k+1)$-regular graph admits a $\{1,2,...,k\}$-avoiding orientation.
\end{theorem}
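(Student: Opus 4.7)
The plan is to reformulate the $\{1,\dots,k\}$-avoiding condition so that \cref{lem: extreme-orientations} applies directly. Since $G$ is $(2k+1)$-regular, an orientation $D$ is $\{1,\dots,k\}$-avoiding exactly when at each $v\in V(G)$ either $d^+_D(v)=0$ or $d^+_D(v)\ge k+1$. Using $d^+_D(v)+d^-_D(v)=2k+1$, the second option is equivalent to $d^-_D(v)\le k$. So it suffices to produce an orientation $D$ of $G$ such that, at every vertex $v$, either $d^+_D(v)\le 0$ or $d^-_D(v)\le k$.

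By \cref{lem: extreme-orientations} with $a=0$ and $b=k$, such an orientation exists as soon as we can partition $V(G)=V_1\cup V_2$ with $\mad(G[V_1])\le 0$ and $\mad(G[V_2])\le 2k$. The first condition forces $V_1$ to be independent; the second will follow from $\Delta(G[V_2])\le 2k$, since the average degree of any subgraph is at most its maximum degree.

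The partition I would use is the obvious one: take $V_1$ to be any maximal independent set of $G$ and let $V_2=V(G)\setminus V_1$. Then $G[V_1]$ is edgeless, so $\mad(G[V_1])=0$. By the maximality of $V_1$, every $v\in V_2$ has at least one neighbour in $V_1$, and therefore $d_{G[V_2]}(v)\le d_G(v)-1 = 2k$. Hence $\Delta(G[V_2])\le 2k$, which gives $\mad(G[V_2])\le 2k$. Applying \cref{lem: extreme-orientations} then yields an orientation of the desired form, which is $\{1,\dots,k\}$-avoiding by the reformulation in the opening paragraph.

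There is no real obstacle in this argument; the only thing being exploited is that $(2k+1)$-regularity together with a maximal independent set drops the maximum degree on the complement by exactly one, which is precisely the slack needed to meet the parameters $a=0$, $b=k$ of the lemma.
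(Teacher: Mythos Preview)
Your proof is correct and follows essentially the same route as the paper: reformulate the $\{1,\dots,k\}$-avoiding condition as the condition of \cref{lem: extreme-orientations} with $a=0$, $b=k$, then exhibit the required partition using an independent set on one side. The only cosmetic differences are that the paper takes a \emph{largest} independent set and argues by contradiction (if $\mad(G[V_2])>2k$ then some $v\in V_2$ has full degree $2k+1$ inside $V_2$, contradicting maximality), whereas you take any maximal independent set and argue directly that $\Delta(G[V_2])\le 2k$; your version is marginally cleaner but the idea is identical.
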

\setcounter{theorem}{11}

\begin{proof} Suppose for contradiction that $G$ is a $(2k+1)$-regular graph that does not admit a $\{1,2,...,k\}$-avoiding orientation.  This is equivalent to saying that $G$ does not have an orientation $D$ satisfying, for all $v\in V(G)$: either $d^+_D(v)\leq 0$ or $d^+_D(v) \geq k+1$. Note that since $G$ is $(2k+1)$-regular this last condition can be replaced by $d^-_D(v)\leq k$. Then by Theorem \ref{lem: extreme-orientations}, for any partition $(V_1,V_2)$ of $V(G)$, either $\mad(G[V_1])> 0$ or $\mad(G[V_2])> 2k$. 

Let $V_1$ be a largest independent set in $G$ and let $V_2 = V(G)\setminus V_1$. Clearly $\mad(G[V_1]) = 0$ so we must have $\mad(G[V_2]) > 2k$. This implies that there exists $v\in V_2$ such that $d_{G[V_2]}(v) = 2k+1$. But since $G$ is $(2k+1)$-regular, $V_1 \cup \{v\}$ is a larger independent set contradicting the choice for $V_1$. Thus, $G$ admits a $\{1,2,...,k\}$-avoiding orientation.
\end{proof}

\section{The lasso and proofs of Theorems \ref{thm: 5+-reg} and \ref{thm: 2hole3home}}

Given a graph $G$ and forbidden list $F:V(G)\to 2^{\mathbb{N}}$, for any orientation $D$ of $G$ we define $D_i = \{v\in V(G): d^+_D(v)=i\}$ and $D_F = \{v\in V(G) : d^+_D(v)\in F(v)\}$. Thus, an orientation $D$ is $F$-avoiding if and only if $D_F=\emptyset$.

A common proof strategy when working with orientations is to find a directed path and then reverse the direction of all of its edges; note that in this modification only the endpoints of the path change their out-degrees. In this section we find another directed subgraph, which we'll call a \emph{lasso}, which can also be used to alter out-degrees in a controlled way. 

Consider the graph obtained from the path $(v_1, \ldots, v_k)$ by adding a single edge between $v_k$ and $v_i$ for some $2\leq i\leq k-1$. The directed graph obtained by orienting the path consistently from $v_1$ to $v_k$ and from $v_k$ to $v_i$ is called an \emph{out-lasso}; the directed graph obtained from an out-lasso by reversing the direction of every edge is called an \emph{in-lasso}. In either case, we call the directed graph a \emph{lasso} and denote it by $L=(v_1, \ldots v_k; v_i)$; we say that $L$ \emph{starts} at $v_1$. We \emph{flip} $L$ by reversing the orientation of the edges in $(v_1, \ldots, v_i)$ and $v_kv_i$; we call the result a \emph{flipped lasso} $L'=(v_i; v_1, \ldots, v_k)$; if $L$ is an out-lasso (in-lasso) we may also call $L'$ a \emph{flipped out-lasso (flipped in-lasso)}.  See figure \cref{fig:lassodef} for an example.

\begin{figure}[htb]
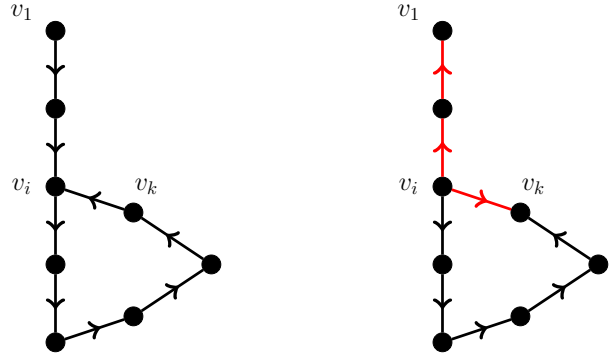

\centering
\includestandalone[width=.175\textwidth]{figures/LassoFigure}
\hspace{2cm}
\includestandalone[width=.175\textwidth]{figures/FlippedLassoFigure}
\caption{An out-lasso and a flipped out-lasso.}
\label{fig:lassodef}
\end{figure}

Suppose that we have graph $G$ with orientation $D$ and that within this we have found a lasso $L=(v_1, \ldots v_k; v_i)$ or flipped lasso $L'=(v_i; v_1, \ldots v_k)$. We say that $L$ or $L'$ is of \emph{type} $(a,b,c)$ (with respect to $D$) if $v_1\in D_a$, $v_i\in D_b$, and $v_k \in D_c$. 

Suppose that we have we have found a lasso $L=(v_1, \ldots v_k; v_i)$ of type $(a, b, c)$ in a graph $G$ with orientation $D$. Suppose then that we flip $L$ and obtain the flipped lasso $L'$, with $D'$ the corresponding modified orientation of $G$. Note that the out-degree of every vertex of $G$ is unchanged from $D$ to $D'$, except for the vertices $v_1, v_i, v_k$. If $L$ is an out-lasso, then we know that $L'$ is of type $(a-1, b+2, c-1)$ (with respect to $D'$); if $L$ is an in-lasso, then we know that $L'$ is of type $(a+1, b-2, c+1)$ (with respect to $D'$). 

The idea of flipping a lasso is only helpful if we can guarantee the existence of a lasso in the first place. The following lemma helps with that. Here, and in what follows, given a graph $G$ with orientation $D$, for all $v\in V(G)$ we denote by $S_v$ the set of all vertices reachable from $v$ in $D$ via a directed path. Similarly, we denote by $T_v$ the set of all vertices which can reach $v$ in $D$ via a directed path. A vertex $v$ in an oriented graph is a \emph{source} if all its incident edges point out of $v$; $v$ is a \emph{sink} if all its incident edges point into $v$.

\begin{lemma}\label{lem: lasso} Let $G$ be a graph with orientation $D$ and suppose there exists $v\in V(G)$ with $S_v \cap T_v = \{v\}$. If $S_v$ contains no sinks, then there exists an out-lasso starting at $v$. If $T_v$ contains no sources, then there exists an in-lasso starting at $v$. 
\end{lemma}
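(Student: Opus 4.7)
The plan is to prove the out-lasso case directly via a greedy walk and to derive the in-lasso case by reversing all edges of $D$. In the reversed orientation $\overline{D}$, the set of vertices reachable from $v$ equals the original $T_v$, and sources in $D$ become sinks in $\overline{D}$, so the hypothesis ``$T_v$ contains no sources'' becomes exactly the hypothesis ``$S_v(\overline{D})$ contains no sinks''; moreover $S_v(\overline{D})\cap T_v(\overline{D})=T_v(D)\cap S_v(D)=\{v\}$, and reversing any out-lasso in $\overline{D}$ starting at $v$ yields an in-lasso in $D$ starting at $v$. Hence it suffices to prove the out-lasso statement.

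To build an out-lasso, I would set $w_1:=v$ and then iteratively extend a directed walk $w_1,w_2,\ldots$ by, at each stage $j$, choosing any out-edge of $w_j$. This is always possible because $w_j$ is reachable from $v$ and hence lies in $S_v$, which by hypothesis contains no sinks. Since $V(G)$ is finite, some vertex must eventually repeat, so I can let $k+1$ be the smallest index with $w_{k+1}\in\{w_1,\ldots,w_k\}$ and write $w_{k+1}=w_j$ for the unique such $j\in\{1,\ldots,k\}$.

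The remaining key step is to verify that $2\leq j\leq k-1$, so that $(w_1,\ldots,w_k;\,w_j)$ is a valid out-lasso starting at $v$. The case $j=k$ is ruled out immediately because the edge $w_k\to w_{k+1}=w_k$ would be a loop, forbidden since $G$ is loopless. The case $j=1$ is the heart of the argument and is where the hypothesis $S_v\cap T_v=\{v\}$ enters: if $j=1$ then $w_1,w_2,\ldots,w_k,w_{k+1}=w_1$ is a closed directed walk through $v$, giving $w_2\in S_v$ (via the edge $v\to w_2$) and $w_2\in T_v$ (along $w_2\to w_3\to\cdots\to w_k\to v$), so the hypothesis forces $w_2=v=w_1$, contradicting the distinctness of $w_1,\ldots,w_k$.

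The main (and essentially only) obstacle is ruling out $j=1$; everything else is a routine greedy-plus-pigeonhole construction. It is precisely the ``no directed cycle through $v$'' content of the assumption $S_v\cap T_v=\{v\}$ that keeps the walk from closing directly onto $v$ and therefore guarantees a genuine lasso rather than a cycle.
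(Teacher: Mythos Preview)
Your proof is correct and follows essentially the same approach as the paper's. The paper takes a longest directed path from $v$ and uses the no-sink hypothesis to find an out-edge from its last vertex back onto the path, while you take a greedy walk until the first vertex-repeat; both then use $S_v\cap T_v=\{v\}$ in exactly the same way (applied to $w_2$, respectively $v_2$) to rule out the return landing on $v$, and both implicitly deduce $k\geq 3$ from this. Your explicit reduction of the in-lasso case via edge reversal is also fine and matches the paper's remark that the two statements are proved analogously.
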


\begin{proof}
Both statements of the lemma can be proved in a similar way; we just prove the first. 
Take a longest directed path $P$ starting at $v$, say $P=(v=v_1,\ldots, v_k)$. Since $v\in S_v$ trivially and $S_v$ contains no sinks, we know that $k\geq 2$. Since $v_k\in S_v$ we know that $v_k$ is not a sink. Since $P$ is however a longest path, we know that $v_k$ has an out-neighbour $v_i$ for some $i\in\{1, \ldots, k-1\}$. In fact, $i\geq 2$, since if $i=1$ then $v_2\in S_v\cap T_v$. In particular, this implies that $k\geq 3$. But now the path $P$, plus the edge $v_kv_i$, gives an out-lasso starting at $v$. 
\end{proof}

We can now prove the following.

\setcounter{theorem}{5}
\begin{theorem}
Let $G$ be a graph and let $F: V(G)\to 2^{\mathbb{N}}$ be such that for each $v\in V(G)$: all holes have size at most two; between any pair of distinct holes is a home of size at least 3, and each end-interval must be either a hole of size one or a home of size at least 2. Then $G$ admits an $F$-avoiding orientation.
\end{theorem}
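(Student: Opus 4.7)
I would argue by contradiction, taking an orientation $D$ of $G$ with $|D_F|$ minimum and assuming for contradiction some $v\in D_F$ with $d^+_D(v)=i$. Because the hole containing $i$ has size at most two and no end-interval can be a size-two hole, at least one value in $\{i-1,i+1\}\cap\{0,\ldots,d(v)\}$ lies outside $F(v)$, giving a useful shift at $v$ of magnitude one; when the hole has size two, a useful shift of magnitude two in the complementary direction is also available. The plan is to realize a useful shift at $v$ via a directed-path reversal or a lasso flip, and to verify that this can be done without turning any other vertex bad, contradicting minimality.

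The first line of attack is path reversal. Assuming without loss of generality that $\delta=-1$ is useful, consider the out-reachable set $S_v$. If $S_v$ contains a good vertex $w$ with $d^+(w)+1\notin F(w)$, reversing a $v$-to-$w$ dipath repairs $v$ and leaves $w$ good. A key observation is that sinks in $S_v$ always furnish an acceptable $w$: the end-interval condition at $0$ forces a sink $s\in S_v$ to be either good with $1\notin F(s)$ (when $0$ lies in a home of size at least two) or bad with unique useful shift $+1$ (when $\{0\}$ is a hole of size one), so in either case a $v$-to-$s$ reversal reduces $|D_F|$.

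If no acceptable $w$ exists in $S_v$, then $S_v$ is sinkless and every good vertex of $S_v$ has $d^+(w)+1\in F(w)$, i.e., its out-degree is pinned at the top of its home. Invoking \cref{lem: lasso} (applicable when $S_v\cap T_v=\{v\}$) yields an out-lasso $L=(v,\ldots,v_k;v_i)$ starting at $v$, whose flip produces shifts of $-1,+2,-1$ at $v,v_i,v_k$. Combining the extracted rigidity on $S_v$ with the hypothesis that every home between holes has size at least three, I would argue that a suitable choice of $L$ makes $v_i$ and $v_k$ absorb their respective shifts. The remaining case $S_v\cap T_v\supsetneq\{v\}$ places $v$ on a directed cycle, permitting a lasso flip with $v$ as its middle vertex to realize a direct $\pm 2$ shift at $v$; together with the symmetric argument using $T_v$ and $\delta=+1$, all cases are covered.

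The main obstacle is ensuring that the auxiliary shifts at the lasso's middle and end vertices land in home values. A good vertex pinned at the top of a home of size at least three can absorb a $-1$ shift, which handles $v_k$. The more delicate part is absorbing the $+2$ shift at $v_i$: the possible configurations of $v_i$'s home and the neighbouring hole (whose size may be one or two) must be analysed to show that at least one choice of lasso — for instance, optimised over the chord $v_kv_i$, over the length of the initial dipath, or by swapping the roles of $S_v$ and $T_v$ — achieves success. This is precisely the place where the lasso technique, rather than mere path reversal, is indispensable.
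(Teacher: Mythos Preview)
Your outline tracks the paper's proof closely—minimise $|D_F|$, argue that every vertex of $S_v\setminus\{v\}$ (in your $\delta=-1$ case) is ``pinned'' just below a hole, invoke \cref{lem: lasso}, and flip—but you are missing the one idea that actually makes the lasso step terminate. In the paper the orientation is chosen to minimise $|D_F|$ \emph{and, subject to that, to maximise $|D_X|$}, where $X(v)$ is the set of home values not adjacent to any hole. This secondary extremal criterion is what produces the contradiction after the flip: flipping the out-lasso $(v,\ldots,v_k;v_i)$ sends $v$ into a home, sends $v_k$ from $D_B$ into $D_X$ (using that any home bounded by holes on both sides has size $\geq 3$), and may send $v_i$ into the size-two hole above it. Thus one only gets $|D^*_F|\le |D_F|$, but always $|D^*_X|>|D_X|$, and that is the contradiction. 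The same secondary criterion also disposes of the case $S_v\cap T_v\neq\{v\}$: reversing a $v$-to-$u$ path with $u\in D_A\cap (S_v\cap T_v)$ increases $|D_X|$ without increasing $|D_F|$.

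Without this second invariant your argument genuinely stalls at the point you flag as ``the main obstacle.'' If every $w\in S_v\setminus\{v\}$ lies in $D_B$ with a hole of size two immediately above it, then \emph{every} out-lasso you can form has its middle vertex $v_i$ land in $F$ after the $+2$ shift, so the flip gives $|D^*_F|=|D_F|$ and nothing to contradict. None of the fixes you list—optimising over the chord, lengthening the initial dipath, or switching to $T_v$—helps here: the first two cannot change the fact that every candidate $v_i$ is pinned in the same way, and the third is unavailable when $v$ sits at the bottom of its size-two hole (so $+1$ is not a useful shift). Likewise, your plan for $S_v\cap T_v\supsetneq\{v\}$—to realise a $\pm2$ shift at $v$ by making $v$ the middle of a lasso—needs an extra handle outside the directed cycle and control over the start and end of that lasso, neither of which you have. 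The missing ingredient is precisely the $|D_X|$-maximisation; once you add it, both obstacles evaporate and the proof runs exactly as in the paper.
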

\setcounter{theorem}{12}

\begin{proof}
For each $v\in V(G)$ we partition $\{0,\ldots, d(v)\}$ into four sets, one being $F(v)$ and the other three as follows: \begin{align*}
   A(v) &= \{i: i\notin F(v), i-1\in F(v)\}\cap \{0,\ldots, d(v)\} \\
   B(v) &= \{i: i\notin F(v), i+1\in F(v)\}\cap \{0,\ldots, d(v)\} \\
   X(v) &= \{0,\ldots, d(v)\}\setminus (A(v)\cup B(v) \cup F(v))
\end{align*} 
Note that for any given vertex $v$, the sets $A(v)$ and $B(v)$ are precisely the integers in homes of $v$ which are adjacent to some hole (Above and Below respectively). Moreover, $A(v)\cap B(v) = \emptyset$ since no homes have size 1. Similarly we can think of $X(v)$ as the integers of $\{0,\ldots, d(v)\}$ which are \emph{far} from (meaning not adjacent to) a hole. For an orientation $D$ of $G$, recall that
$D_F = \{v\in V(G) : d^+_D(v)\in F(v)\}$; we now extend this definition so that we may also refer to $D_A, D_B, D_X$, that is, those vertices $v\in V(G)$ with out-degree in the set $A(v), B(v)$, or $X(v)$, respectively.

Let $D$ be an orientation which minimizes $|D_F|$ and subject to that, maximizes $|D_X|$. If $|D_F|=0$ then $D$ is an $F$-avoiding orientation, so suppose for a contradiction that $|D_F|\neq 0$; hence we may choose $v\in D_F$. Note that since all holes of $v$ are of size at most $2$, and we cannot have a hole of size two as en end-interval, either $d^+_D(v)+1 \in A(v)$, or  $d^+_D(v)-1 \in B(v)$. We shall consider both cases.

\textbf{Case 1: $d^+_D(v) + 1 \in A(v)$}. We wish to increase the out-degree of $v$ by one, moving $v$ into a home. To this end, note that there exists $w\in T_v\setminus \{v\}$, since $d^+_D(v)+1\in A(v)$ implying that $d^+_D(v)\neq d(v)$. Choose some directed path from $w$ to $v$ in $D$, and let $D'$ be the orientation obtained from $D$ by reversing the direction of every edge of this path. Note $d^+_{D'}(v) = d^+_D(v) +1$ so $v\notin D'_F$ and thus by minimality of $|D_F|$, $w\in D'_F$ and $w\notin D_F$. Since $d^+_{D'}(w) = d^+_D(w) - 1$, this means that $w\in D_A$. Moreover, $T_v\setminus \{v\}\subseteq D_A$.

Suppose now that there exists some $u\in S_v \cap T_v \setminus \{v\}$. Since $u\in T_v\setminus \{v\}$, we also know that $u\in D_A$. Since $u\in S_v\setminus \{v\}$, we can let $D''$ be the orientation obtained from $D$ by reversing the direction of each edge in some directed path from $v$ to $u$. Then $d^+_{D''}(v) = d^+_D(v)-1$ and $d^+_{D''}(u) = d^+_D(u)+1$. Since $u\in D_A$ this means that $u\in D''_X$ (since all holes between homes have size at least three). We may have $v\in D''_F$ or not, depending on whether the hole is size one or two. But in any case, we have $|D''_F|\leq |D_F|$ and $|D''_X|>|D_X|$, and we obtain a contradiction. Thus, we may assume $S_v \cap T_v = \{v\}$.

Note that $T_v\setminus\{v\}$ contains no sources, since $T_v\setminus\{v\}\subseteq D_A$, and since end-intervals cannot be homes of size one. Thus, by \cref{lem: lasso} there exists an in-lasso starting at $v$ of type $(d^+_D(v), d^+_D(w_1), d^+_D(w_2))$ for some $w_1,w_2\in T_v\setminus \{v\}$ which implies $w_1,w_2\in D_A$. Let $D^*$ be the orientation obtained from $D$ by flipping the in-lasso. Then the flipped in-lasso is of type $(d^+_D(v)+1, d^+_D(w_1)-2, d^+_D(w_2)+1)$ with respect to $D^*$. By the assumption of this case, this means that $v\in D^*_A$. Since $w_2\in D_A$, and between any two distinct holes is a home of size at least $3$, we also get that $w_2\in D^*_X$. We may have $w_1\in D^*_F$ or not, depending on whether the hole is size one or two. However, in any case,  $|D^*_F|\leq |D_F|$ and $|D^*_X| > |D_X|$, and we obtain a contradiction.

\textbf{Case 2:} $d^+_D(v) - 1 \in B_F(v)$. We wish to decrease the out-degree of $v$ by one, moving $v$ into a home. To this end, note that there exists $u\in S_v\setminus \{v\}$, since $d^+_D(v)-1\in B(v)$ implying that $d^+_D(v)\neq 0$. Choose some directed path from $v$ to $u$ in $D$, and let $D'$ be the orientation obtained from $D$ by reversing the direction of every edge of this path. Note $d^+_{D'}(v) = d^+_D(v) -1$ so $v\notin D'_F$ and thus by minimality of $|D_F|$, $u\in D'_F$ and $u\notin D_F$. Since $d^+_{D'}(u) = d^+_D(u) + 1$, this means that $u\in D_B$. Moreover, $S_v\setminus \{v\}\subseteq D_B$.

Suppose now that there exists some $w\in S_v \cap T_v \setminus \{v\}$. Since $w\in S_v\setminus \{v\}$, we also know that $w\in D_B$. Since $w\in S_v\setminus \{v\}$, we can let $D''$ be the orientation obtained from $D$ by reversing the direction of each edge in some directed path from $v$ to $w$. Then $d^+_{D''}(v) = d^+_D(v)-1$ and $d^+_{D''}(w) = d^+_D(w)+1$. Since $w\in D_B$ this means that $w\in D''_X$ (since all holes between homes have size at least three). We may have $v\in D''_F$ or not, depending on whether the hole is size one or two. But in any case, we have $|D''_F|\leq |D_F|$ and $|D''_X|>|D_X|$, and we obtain a contradiction. Thus, we may assume $S_v \cap T_v = \{v\}$.

Note that $S_v\setminus\{v\}$ contains no sinks, since $S_v\setminus\{v\}\subseteq D_B$, and since end-intervals cannot be homes of size one. Thus, by \cref{lem: lasso} there exists an out-lasso starting at $v$ of type $(d^+_D(v), d^+_D(u_1), d^+_D(u_2))$ for some $u_1,u_2\in S_v\setminus\{v\}$. Let $D^*$ be the orientation obtained from $D$ by flipping the out-lasso. Then the flipped out-lasso is of type $(d^+_D(v)-1, d^+_D(u_1)+2, d^+_D(u_2)-1)$ with respect to $D^*$. By the assumption of this case, this means that $v\in D^*_B$. Since $u_2\in D_B$, and between any two distinct holes is a home of size at least $3$, we also get that $u_2\in D^*_X$. We may have $u_1\in D^*_F$ or not, depending on whether the hole is size one or two. However, in any case,  $|D^*_F|\leq |D_F|$ and $|D^*_X| > |D_X|$, and we obtain a contradiction.
\end{proof}

We can now use \cref{thm: 2hole3home} (and Theorem \ref{lem: extreme}) to  validate \cref{conj: regConst} for $5$- and $6$-regular graphs. Our methods work for all cases of the following theorem, although when all holes of are of size one, we will just quote the after-mentioned result of Ma and Lu \cite{no-consecutive}  (namely that Conjecture \ref{conj: F-avoiding} holds when all holes have size 1), for the sake of efficiency.

\setcounter{theorem}{1}
\begin{theorem}
Let $G$ be a $d$-regular graph with $d\geq 5$, and let $F\subseteq\{0, 1, 2, \ldots, d\}$ with $|F|\leq 2$. Then $G$ admits an $F$-avoiding orientation.
\end{theorem}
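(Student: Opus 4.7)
My plan is to case-split on the shape of $F$ and invoke a different tool from earlier in the paper for each case.

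If no two elements of $F$ are consecutive (which covers $|F|\leq 1$ automatically), then every hole has size one, and the theorem of Ma and Lu \cite{no-consecutive} yields an $F$-avoiding orientation. Otherwise $F=\{a,a+1\}$ for some $0\leq a\leq d-1$. Since reversing every edge of an orientation sends $F$-avoiding to $\{d-f:f\in F\}$-avoiding, I may pair the cases $a$ and $d-1-a$ and assume $a\leq\lfloor(d-1)/2\rfloor$. In the interior range $2\leq a\leq d-3$ the single hole $\{a,a+1\}$ has size $2$, and the end-intervals $\{0,\ldots,a-1\}$ and $\{a+2,\ldots,d\}$ are homes of size at least $2$; hence \cref{thm: 2hole3home} applies directly.

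The only remaining shapes (after the symmetry reduction) are $a=0$ and $a=1$. For $a=0$, i.e.\ $F=\{0,1\}$, I would apply \cref{thm_FG} with $\ell\equiv 2$ and $u\equiv d$: for any $S\subseteq V(G)$, double-counting gives $2e[S]+\delta(S)=d|S|$, so $e[S]\leq d|S|/2\leq u(S)$ and $e[S]+\delta(S)=d|S|-e[S]\geq d|S|/2\geq 2|S|=\ell(S)$ using $d\geq 4$, verifying both hypotheses of Frank--Gy\'arf\'as.

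The main obstacle is $a=1$, i.e.\ $F=\{1,2\}$, since here the end-interval at $0$ is a home of size one and \cref{thm: 2hole3home} no longer applies. I split on parity. If $d$ is even then $G$ has all even degrees, so each connected component admits an Eulerian circuit and orienting along these circuits produces an orientation with $d^+(v)=d/2\geq 3$ for every $v$, which avoids $\{1,2\}$. If $d$ is odd, write $d=2k+1$ with $k\geq 2$; then $\{1,2\}\subseteq\{1,\ldots,k\}$, so the $\{1,\ldots,k\}$-avoiding orientation produced by \cref{lem: extreme} is in particular $\{1,2\}$-avoiding. This dispatches every remaining case, and in fact the argument works uniformly for all $d\geq 5$.
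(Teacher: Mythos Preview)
Your proof is correct and uses the same toolkit as the paper---Ma and Lu for size-one holes, \cref{thm: 2hole3home} for the generic consecutive pair, the reversal symmetry, and \cref{lem: extreme} for $F=\{1,2\}$ in odd regularity---so the two arguments are essentially the same. The one organizational difference is that the paper opens by building a near-balanced orientation (via the dummy-vertex Eulerian trick) with all out-degrees in $\{\lfloor d/2\rfloor,\lceil d/2\rceil\}$, which in one stroke disposes of every $F=\{a,a+1\}$ not meeting the middle; this replaces both your Frank--Gy\'arf\'as computation for $a=0$ and your Eulerian argument for $a=1$ with $d$ even, and leaves only the two or three values of $a$ adjacent to $d/2$ to be handled by \cref{thm: 2hole3home} and \cref{lem: extreme}. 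Your route trades that single unifying step for a slightly longer case list at the boundary, but nothing is lost or gained in strength.
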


\begin{proof} Let $G'$ be the graph obtained from $G$ by adding a dummy vertex and joining it to every vertex of odd degree in $G$. There are an even number of these, so $G'$ is Eulerian, and can be decomposed into cycles. By orienting each of these cycles consistently we get an orientation $D'$ of $G'$ where $d^+_{D'}(v)=d_{D'}^-(v)$ for all $v\in V(G')$. Letting $D$ be the restriction of $D'$ to $G$ we get that $d^+_{D}(v)\in\{\lfloor\frac{d}{2}\rfloor, \lceil\frac{d}{2}\rceil\}$ for all $v\in V(G)$. 

As discussed above, following Ma and Lu \cite{no-consecutive} we may assume that $F=\{x, x+1\}$ for some $x\in\{0,1, \ldots, d\}$. So, if $D$ is not an $F$-avoiding orientation then $\{x, x+1\}\cap\{\lfloor\frac{d}{2}\rfloor, \lceil\frac{d}{2}\rceil\}$ is nonempty. 

Suppose first that $x= \lfloor\frac{d}{2}\rfloor$. Then $F$ consists of one hole of size two and two end-homes of sizes $\lfloor\frac{d}{2}\rfloor$ and $\lceil\frac{d}{2}\rceil-1$, respectively. Since $d\geq 5$ these end-homes are both of size at least two, and hence by Theorem \ref{thm: 2hole3home} $G$ admits an $F$-avoiding orientation.

Suppose next that $x=\lfloor\frac{d}{2}\rfloor-1$. Then $F$ consists of one hole of size two and two end-homes of sizes $\lfloor\frac{d}{2}\rfloor-1$ and $\lceil\frac{d}{2}\rceil$, respectively. If $d\geq 6$ then these end-home are both of size at least two, and again Theorem \ref{thm: 2hole3home} implies that $G$ admits an $F$-avoiding orientation. If $d=5$ then $F=\{1,2\}$ and we get that $G$ admits an $F$-avoiding orientation by Theorem \ref{lem: extreme}.

We may now assume that $x=\lceil\frac{d}{2}\rceil=\tfrac{d+1}{2}$ (so $d$ is odd). Then $F$ consists of one hole of size two and two end-homes of sizes $\tfrac{d+1}{2}$ and $\tfrac{d-3}{2}$, respectively. If $d\geq 7$ these end-intervals are both of size at least two, and again Theorem \ref{thm: 2hole3home} implies that $G$ admits an $F$-avoiding orientation. If $d=5$ then $F=\{3,4\}$. Apply Theorem \ref{lem: extreme} to get an orientation $D^*$ that is $\{1,2\}$-avoiding. Then reverse the direction of every edges in $D^*$. This new orientation of $G$ is $\{3, 4\}$-avoiding, completing our proof. 
    \end{proof}

\bibliographystyle{abbrv}
\bibliography{bib}

\end{document}